\definecolor{webgreen}{rgb}{0,.5,0}
\definecolor{webbrown}{rgb}{.6,0,0}
\newcommand{\seqnum}[1]{\href{https://oeis.org/#1}{\rm \underline{#1}}}
\def\suchthat{\, : \, }
\def\modd#1 #2{#1\ \mbox{\rm (mod}\ #2\mbox{\rm )}}
\def\uni{ \, \cup \, }
\def\Enn{\mathbb{N}}
\def\@fnsymbol#1{\ensuremath{\ifcase#1\or *\or 
   \mathsection\or \mathparagraph\or \|\or **\or \dagger\dagger
   \or \ddagger\ddagger \else\@ctrerr\fi}}
\begin{document}

\theoremstyle{plain}
\newtheorem{theorem}{Theorem}
\newtheorem{corollary}[theorem]{Corollary}
\newtheorem{lemma}[theorem]{Lemma}
\newtheorem{proposition}[theorem]{Proposition}

\theoremstyle{definition}
\newtheorem{definition}[theorem]{Definition}
\newtheorem{example}[theorem]{Example}
\newtheorem{conjecture}[theorem]{Conjecture}
\newtheorem{openproblem}[theorem]{Open Problem}

\theoremstyle{remark}
\newtheorem{remark}[theorem]{Remark}

\title{Rudin-Shapiro Sums Via Automata Theory and Logic}

\author{
Narad Rampersad\footnote{Research funded by a grant from NSERC, 2019-04111.}\\
Dept. of Mathematics and Statistics\\
University of Winnipeg\\
Winnipeg, MB R2B 2E9\\
Canada\\
\href{mailto:n.rampersad@uwinnipeg.ca}{\tt n.rampersad@uwinnipeg.ca}\\
\and
Jeffrey Shallit\footnote{Research funded by a grant from NSERC, 2018-04118.} \\
School of Computer Science\\
University of Waterloo\\
Waterloo, ON  N2L 3G1\\
Canada\\
\href{mailto:shallit@uwaterloo.ca}{\tt shallit@uwaterloo.ca}\\
}

\maketitle

\begin{abstract}
We show how to obtain, via a unified framework provided by logic and automata theory, many classical results of Brillhart and Morton on Rudin-Shapiro sums.  The techniques also facilitate easy proofs for new results.
\end{abstract}

\section{Introduction}
\label{intro}
The Rudin-Shapiro coefficients
$$(a(n))_{n \geq 0} = (1,1,1,-1,1,1,-1,1,\ldots)$$
form an infinite sequence of $\pm 1$ defined recursively by the identities
\begin{align*}
a(2n) & = a(n) \\
a(2n+1) &= (-1)^n a(n) 
\end{align*}
and the initial condition $a(0) = 1$.
It is sequence
\seqnum{A020985} in the On-Line Encyclopedia of Integer
Sequences (OEIS) \cite{Sloane:2023}.
It was apparently first discovered
by Golay \cite{Golay:1949,Golay:1951}, and later studied
by Shapiro \cite{Shapiro:1951}
and Rudin \cite{Rudin:1959}.  The map $a(n)$ 
can also be defined as
$a(n) = (-1)^{r_n}$,
where $r_n$ counts the number of (possibly overlapping) occurrences of $11$ in the binary representation of $n$
\cite[Satz 1]{Brillhart&Morton:1978}.

The Rudin-Shapiro coefficients have many intriguing properties and have been studied by many authors; for example, see \cite{Allouche:1987,Brillhart&Erdos&Morton:1983,Dekking&MendesFrance&vanderPoorten:1982,MendesFrance&Tenenbaum:1981,MendesFrance:1982}.   They
appear in number theory \cite{Mauduit&Rivat:2015},
analysis \cite{Kahane:1985}, combinatorics \cite{Konieczny:2019},
and even optics \cite{Golay:1949,Golay:1951}, just to name a few
places.

In a classic paper from 1978, written in German, Brillhart and Morton \cite{Brillhart&Morton:1978} studied sums of these coefficients, and defined 
the two sums\footnote{One can make the case that these definitions are ``wrong'', in the sense that many results become significantly simpler to state if the sums are taken over the range $0 \leq i < n$ instead.  But the definitions of Brillhart-Morton are now very well-established, and using a different indexing would also make it harder to compare our results with theirs.}
\begin{align}
s(n) &= \sum_{0\leq i\leq n} a(i) \label{defs} \\
t(n) &= \sum_{0 \leq i\leq n} (-1)^i a(i) .
\label{deft}
\end{align}
The first few values of the functions $s$ and $t$
are given in Table~\ref{tab1}.  They are
sequences \seqnum{A020986} and \seqnum{A020990} respectively,
in the OEIS.  
\begin{table}[H]
\begin{center}
\begin{tabular}{c|ccccccccccccccccccccc}
$n$ & 0& 1& 2& 3& 4& 5& 6& 7& 8& 9&10&11&12&13&14&15&16&17&18&19&20\\
\hline
$s(n)$ & 1& 2& 3& 2& 3& 4& 3& 4& 5& 6& 7& 6& 5& 4& 5& 4& 5& 6& 7& 6& 7 \\
$t(n)$ & 1& 0& 1& 2& 3& 2& 1& 0& 1& 0& 1& 2& 1& 2& 3& 4& 5& 4& 5& 6& 7
\end{tabular}
\end{center}
\caption{First few values of $s(n)$ and $t(n)$.}
\label{tab1}
\end{table}
A priori it is not even clear that these sums are always non-negative, but Brillhart and Morton proved that they are, and also proved many other properties of them.   Most of these properties can be proved by induction, sometimes rather tediously.  

In this paper we show how to replace nearly all of these inductions with techniques from logic and automata theory.  Ultimately, almost all the Brillhart-Morton results\footnote{The main exceptions are the results about
the limit points of $s(n)/\sqrt{n}$ and $t(n)/\sqrt{n}$ in \cite{Brillhart&Morton:1978}.}
can be proved in a simple, unified manner, simply by stating them in first-order logic and applying the {\tt Walnut} theorem-prover \cite{Mousavi:2016,Shallit:2022}.  
In fact, there are basically only three simple inductions in our entire paper.   One is the brief induction used to prove Lemma~\ref{pseudosquare}.  The other two are the inductions used
in Theorem~\ref{thm1} to prove the correctness of our constructed automata, and in these cases the induction step itself can be proved by {\tt Walnut}!   We are also able to easily derive and prove new results; see Section~\ref{new}.

Finally, another justification for this paper is that the same
techniques can easily
be harnessed to handle related sequences; for example, see
\cite{Shallit:2023}.

The paper is organized as follows:  Section~\ref{notation} gives basic notation used in describing base-$b$ expansions.  Section~\ref{automata} gives the automata computing the functions $s(n)$ and $t(n)$ and
justifies their correctness.  In Section~\ref{proofs} we begin proving the results of Brillhart and Morton using our technique, and illustrate the basic ideas, and this continues in Section~\ref{lemmas}. In Section~\ref{special} we show
how to compute various special values of $s$ and $t$.  Then in Section~\ref{inequalities}, we obtain the deepest results of Brillhart and Morton, on inequalities for the Rudin-Shapiro sums.   In Section~\ref{counting} we illustrate how our ideas can be used to enumerate some quantities connected with $s(n)$.    In Section~\ref{new} we use our technique to prove various new results about the Rudin-Shapiro sums.  In Section~\ref{curves} we prove properties about a space-filling curve associated with $s$ and $t$.  Finally, in Section~\ref{further}, we explain how the technique can be used to prove properties of two analogues of the Rudin-Shapiro sums.

We assume the reader is familiar with the basics of automata and
regular expressions, as discussed, for example, in
\cite{Hopcroft&Ullman:1979}.

\section{Notation}
\label{notation}

Let $b\geq 2$ be an integer, and let $\Sigma_b$ denote the alphabet $\{0,1,\ldots, b-1 \}$.  

For a string $x \in \Sigma_b^*$, we let
$[x]_b$ denote the integer
represented by $x$ in base $b$, with the more significant digits at the left.  That is, if $x = c_1 c_2 \cdots c_i$,
then $[x]_b := \sum_{1 \leq j \leq i} c_j b^{i-j}$.  For example,
$[00101011]_2 = [223]_4 = 43$.

For an integer $n \geq 0$, we let $(n)_b$ denote the canonical base-$b$ representation of $n$, starting with the most significant digit, with no leading zeros.  For example,
$(43)_2 = 101011$.  We have
$(0)_b = \epsilon$, the empty
string.

\section{Automata and logic}
\label{automata}

{\tt Walnut} is free software, originally designed by Hamoon Mousavi, that can rigorously prove propositions about automatic sequences  \cite{Mousavi:2016,Shallit:2022}.   If a proposition $P$ has no free variables, one only has to state it in first-order logic and then the {\tt Walnut} prover will prove or disprove it (subject to having enough time and space to complete the calculation)\footnote{In fact, all the {\tt Walnut} code in this paper runs in a matter of milliseconds.}.  If the proposition $P$ has free variables, the program
computes a finite automaton accepting exactly the values of the free variables that make $P$ evaluate to true.

Because of these two features, there is a philosophical choice in using {\tt Walnut}.  Either we can state the desired result as a first-order logical formula and verify it, or, if the theorem involves characterizing a set of numbers $n$ with a certain property, we can simply create a formula with $n$ as a free variable, and consider the resulting automaton as the desired characterization.   In this case, if the automaton is simple enough, we can find a short regular expression specifying the accepted strings, and then interpret it as a function of $n$ in terms of powers of the base $b$. The former is most appropriate when we already know the statement of the theorem we are trying to prove; the latter when we do not yet know the precise characterization that will eventually become a theorem.  In this paper we have used both approaches, to illustrate the ideas.

As is well-known, the Rudin-Shapiro sequence is $2$-automatic and therefore, by a classic theorem of Cobham \cite{Cobham:1972}, also $4$-automatic.   This means there is a deterministic finite automaton with output (DFAO) that on input $n$, expressed in base $4$, reaches a state with output $a(n)$.
This Rudin-Shapiro automaton is
illustrated below in Figure~\ref{rudin4-aut}. 
It is a simple variation on the one given in \cite{Allouche:1987}.
\begin{figure}[H]
\begin{center}
\includegraphics[width=5.5in]{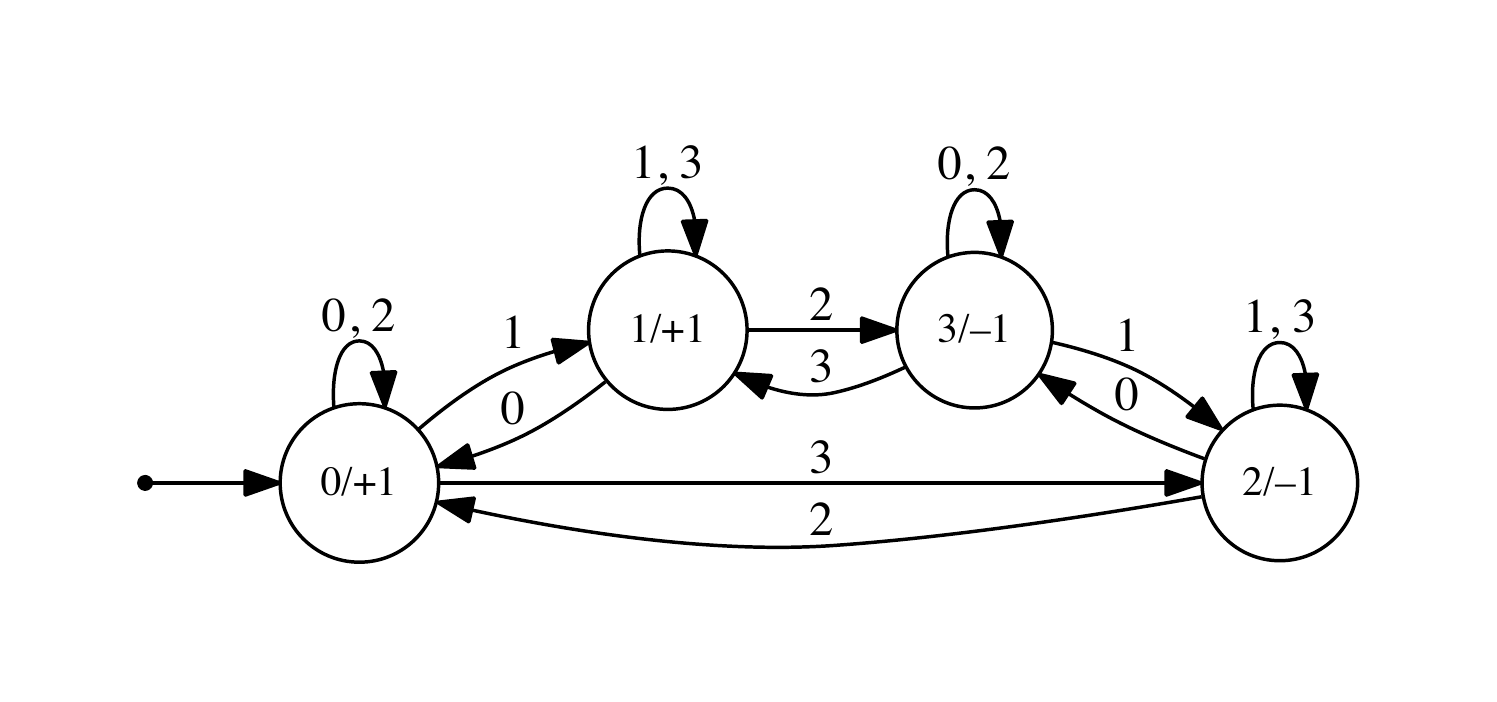}
\end{center}
\vskip -.4in
\caption{DFAO computing the Rudin-Shapiro function, in base $4$.}
\label{rudin4-aut}
\end{figure}
\noindent Here states are labeled $a/b$, where $a$ is the state number and $b$ is the output.
The initial state is state $0$, and
the automaton reads the digits of the base-$4$ representation of $n$, starting with the most significant digit.  Leading zeros are allowed and do not affect the result.

Once the automaton in Figure~\ref{rudin4-aut} is saved
as a file named {\tt RS4.txt},  in {\tt Walnut} we can refer to its value at a variable
$n$ simply by writing {\tt RS4[n]}.
We would like to do the same thing for
the Rudin-Shapiro summatory functions $s(n)$ and $t(n)$ defined in Eqs.~\eqref{defs} and \eqref{deft}, but here we run into a fundamental limitation of {\tt Walnut}:  it can only directly deal with functions of finite range (like automatic sequences).  Since $s(n)$ and $t(n)$ are unbounded, we must find another way to deal with them.

A common way to handle functions in first-order logic is to treat them as {\it relations}:   instead of writing
$f(n) = x$, we construct a relation $R_f(n,x)$ that is true iff $f(n) =x$.
If the relation $R_f(n,x)$ is representable by a deterministic
finite automaton (DFA) taking as input
$n$ in base $b_1$ and $x$ in base $b_2$, in parallel, and accepting iff $R_f(n,x)$
holds, then we say that $f$ is
{\it $(b_1, b_2)$-synchronized}.   For more information about synchronized functions, see \cite{Carpi&Maggi:2001,Shallit:2021h}.

Our first step, then, is to show that the
functions $s(n)$ and $t(n)$ are
$(4,2)$-synchronized.  These automata are illustrated in Figures~\ref{synch1} and \ref{synch2}.
Here accepting states are labeled by
double circles.

\begin{figure}[H]
\begin{center}
\includegraphics[width=5.5in]{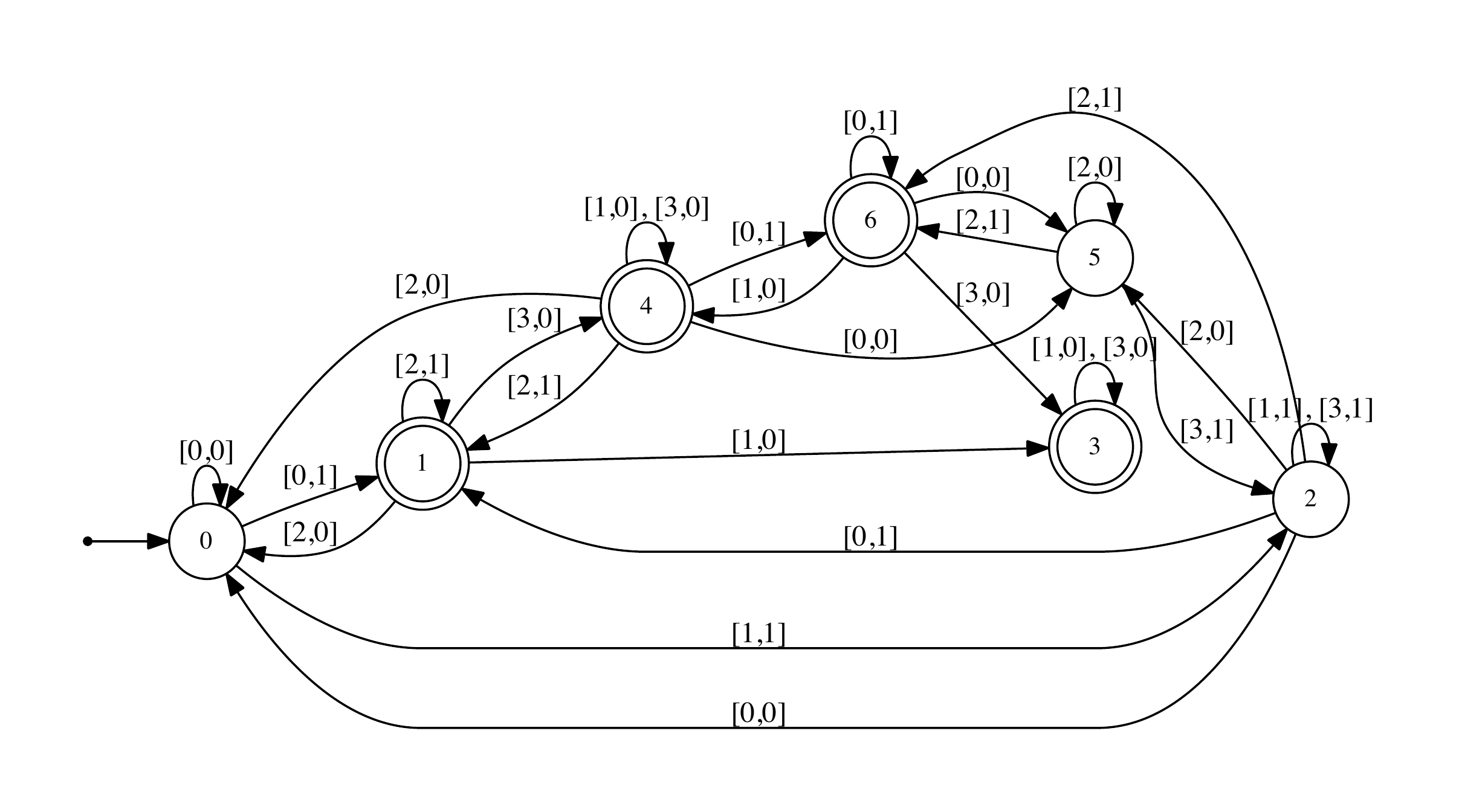}
\end{center}
\caption{Synchronized automaton for $s(n)$.}
\label{synch1}
\end{figure}

\begin{figure}[H]
\begin{center}
\includegraphics[width=5.5in]{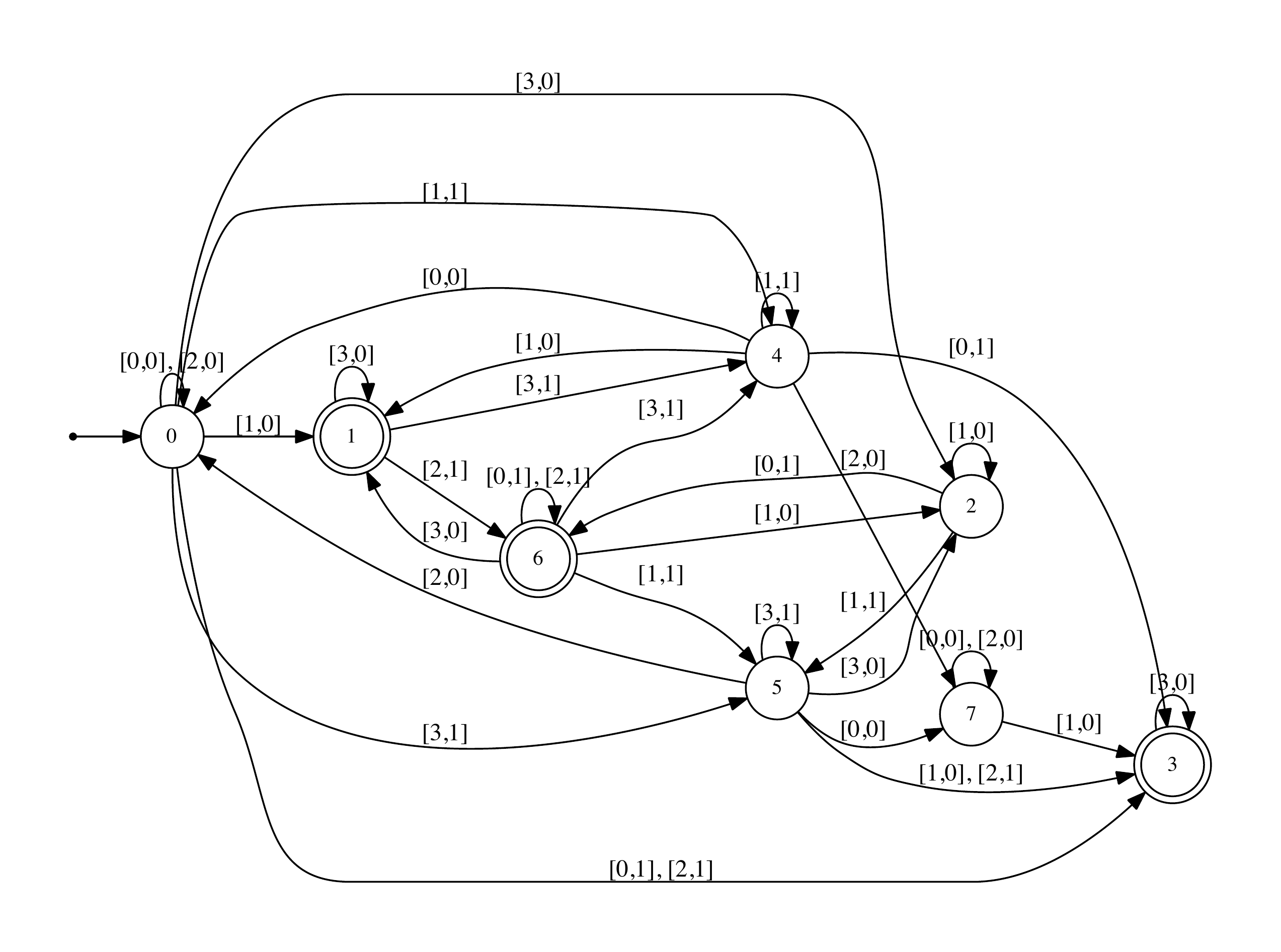}
\end{center}
\caption{Synchronized automaton for $t(n)$.}
\label{synch2}
\end{figure}

We obtained these automata by
``guessing'' them from calculated initial values of the sequences $s$ and $t$, using the
Myhill-Nerode theorem \cite[\S 3.4]{Hopcroft&Ullman:1979}.  However, we will see below in Remark~\ref{remark5} that we could have also deduced them from Satz 3 of \cite{Brillhart&Morton:1978} (Lemma 2
of \cite{Brillhart&Morton:1996}).   The automaton for $s$ is called
{\tt rss} in {\tt Walnut}, and the automaton for $t$ is called
{\tt rst}.

Once we have guessed the automata, we need to verify they are correct. 
\begin{theorem}
The automata in Figs.~\eqref{synch1} and \eqref{synch2} correctly compute $s(n)$ and
$t(n)$.
\label{thm1}
\end{theorem}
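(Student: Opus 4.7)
The plan is to prove the theorem by induction on $n$, with the inductive step itself discharged by {\tt Walnut}. Writing $s_A(n)$ for the value the automaton {\tt rss} of Figure~\ref{synch1} associates with $n$---that is, $s_A(n) = x$ iff {\tt rss} accepts the pair $(n,x)$ with $n$ in base $4$ and $x$ in base $2$---my first step would be to have {\tt Walnut} confirm that this relation is total and functional, so that $s_A$ is well defined as a map $\mathbb{N} \to \mathbb{N}$. The analogous preliminary for {\tt rst} is handled in exactly the same way.

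The base case $s_A(0) = 1$ can be read off the initial state of {\tt rss}. Since the defining recurrence of $s$ is $s(n) = s(n-1) + a(n)$ for $n \geq 1$, it is enough to verify $s_A(n) = s_A(n-1) + a(n)$. Because $a(n) \in \{-1,+1\}$ and $s$ is always nonnegative, I would split this into two cases according to the sign of $a(n)$ and have {\tt Walnut} decide the sentences
\begin{align*}
\forall n \geq 1,\ \forall x, y : & \ \bigl(\mathtt{RS4}[n] = 1 \land \mathtt{rss}(n-1, x) \land \mathtt{rss}(n, y)\bigr) \implies y = x+1,\\
\forall n \geq 1,\ \forall x, y : & \ \bigl(\mathtt{RS4}[n] = -1 \land \mathtt{rss}(n-1, x) \land \mathtt{rss}(n, y)\bigr) \implies x = y+1.
\end{align*}
Together with the base case and the uniqueness check these imply $s_A = s$ by induction. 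For $t$ I would proceed identically using the recurrence $t(n) = t(n-1) + (-1)^n a(n)$, except that the sign of $(-1)^n a(n)$ depends both on the parity of $n$ and on $a(n)$, so there are four short {\tt Walnut} queries instead of two.

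The main obstacle is not mathematical but is a matter of bookkeeping. The synchronized automaton reads $n$ in base $4$ but its second argument in base $2$, so one must be careful that the mixed-base manipulations $n \mapsto n-1$ and $x \mapsto x \pm 1$ are expressed in a form {\tt Walnut} handles correctly, and the signed output of the DFAO {\tt RS4} must be encoded in a way compatible with {\tt Walnut}'s natural-number alphabets (for example, by coding $a(n) = -1$ as output $0$ and $a(n) = +1$ as output $1$ and adjusting the comparisons above accordingly). Once these conventions are pinned down, each of the displayed sentences is a closed first-order formula that {\tt Walnut} decides automatically, which is exactly why the authors can describe the induction step in this theorem as being carried out by the prover itself.
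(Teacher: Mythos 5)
Your proposal matches the paper's proof in all essentials: establish totality and functionality of the synchronized relations, check the base case by inspection, and discharge the inductive step by having {\tt Walnut} verify the recurrence split into two cases for $s$ (by the sign of $a$) and four cases for $t$ (by the sign of $a$ and the parity of $n$). The only cosmetic differences are that you state the step in terms of $n-1$ and $n$ with the constraint $y = x\pm 1$ between the two output values, while the paper phrases it as $n$ to $n+1$ and asserts $\mathtt{rss}(n+1, y\pm 1)$, and that you place the totality/functionality check before the induction whereas the paper defers it to a remark afterward.
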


\begin{proof}
Let $s_1(n)$ (resp., $t_1(n)$) be the function
computed by the automaton in Fig.~\eqref{synch1} (resp., Fig.~\eqref{synch2}).  We prove that
$s_1 (n) = s(n)$ and $t_1(n) = t(n)$ by
induction on $n$.

First we check
that $s_1(0) = s(0) = 1$ and $t_1(0)=t(0)=1$, which we can see simply by inspecting the automata.

Now assume that $n \geq 1$ and
$s_1(n) = s(n)$ and $t_1(n) = t(n)$.
We prove with
{\tt Walnut\/} that
$s_1(n+1) = s_1(n) + a(n+1)$
and $t_1(n+1) = t_1(n) + (-1)^{n+1} a(n+1)$.
\begin{verbatim}
eval test1 "?msd_4 An,y ($rss(n,y) & RS4[n+1]=@1) => $rss(n+1,?msd_2 y+1)":
eval test2 "?msd_4 An,y ($rss(n,y) & RS4[n+1]=@-1) => $rss(n+1,?msd_2 y-1)":
# show that rss is correct

def even4 "?msd_4 Ek n=2*k":
def odd4 "?msd_4 Ek n=2*k+1":
eval test3 "?msd_4 An,y ($rst(n,y) & ((RS4[n+1]=@1 & $even4(n+1)) | 
   (RS4[n+1]=@-1 & $odd4(n+1)))) => $rst(n+1,?msd_2 y+1)":
eval test4 "?msd_4 An,y ($rst(n,y) & ((RS4[n+1]=@-1 & $even4(n+1)) | 
   (RS4[n+1]=@1 & $odd4(n+1)))) => $rst(n+1, ?msd_2 y-1)":
# show that rst is correct
\end{verbatim}
and {\tt Walnut} returns {\tt TRUE}
for all of these tests.
Now the correctness of our automata follows immediately by induction.
\end{proof}

\begin{remark} Some {\tt Walnut} syntax needs to be explained here.   First, the capital
{\tt A} is {\tt Walnut}'s abbreviation for
$\forall$ (for all); capital {\tt E} is
{\tt Walnut}'s abbreviation for
$\exists$ (there exists); the jargon
{\tt ?msd\_}$b$ for a base $b$ instructs
that a parameter or expression is to be
evaluated using base-$b$ numbers, and
an {\tt @} sign indicates the value of
an automatic sequence (which is allowed to
be negative).  The symbol {\tt \&} is logical {\tt AND}; the symbol {\tt |} is logical {\tt OR}; and the symbol {\tt =>} is logical implication.
\end{remark}

\begin{remark}
There is a small technical wrinkle that we have glossed over, but it needs saying:   the default domain for {\tt Walnut} is $\Enn$, the natural numbers.  But we do not know, a priori, that the functions $s$ and $t$ take only non-negative values.   Therefore, it is conceivable that our verification might fail simply because negative numbers appear as intermediate values in a calculation.  We can check that this does not happen simply by checking that {\tt rss} and {\tt rst} both are truly representations of functions:
\begin{verbatim}
eval test5 "?msd_4 (An Ey $rss(n,y)) & 
   An ~Ex,y ($rss(n,x) & $rss(n,y) & (?msd_2 x!=y))":
eval test6 "?msd_4 (An Ey $rst(n,y)) & 
   An ~Ex,y ($rst(n,x) & $rst(n,y) & (?msd_2 x!=y))":
\end{verbatim}
These commands assert that for every $n$ there
is at least one value $y$ such that
$s(n) = y$, and there are not
two different such $y$, and the same
for $t$.  Both evaluate to {\tt TRUE}.   
This shows that the relations computed by {\tt rss} and {\tt rst} are well-defined functions, and take only non-negative
values.  As a result, we have already deduced
Satz 11 of \cite{Brillhart&Morton:1978}:
$t(n) \geq 0$ for all $n$.
\end{remark}

The advantage of the representation of $s(n)$ and $t(n)$ as synchronized automata is that they essentially encapsulate all the needed knowledge about $s(n)$ and $t(n)$ to replace tedious inductions about them.   The simple induction we used to verify them replaces, in effect, all the other needed inductions.

\begin{remark}
The reader may reasonably ask, as one referee did, why use
base-$4$ for $n$ and base-$2$ for the values of
$s$ and $t$?  The reason is because the values of $s(n)$ and
$t(n)$ grow like $\sqrt{n}$; if we are going to have any hope
of an automaton processing, say, $n$ and $s(n)$ in parallel,
then length considerations show that the base of representation for $n$
must be the square of that for $s(n)$ and $t(n)$.
Furthermore,
by a classical theorem of Cobham \cite{Cobham:1969}, the Rudin-Shapiro
sequence itself can only be generated by an automaton using base
a power of $2$.   This forces the base for $n$ to be $2^{2k}$ for
some $k$ and the base for $s$ and $t$ to be $2^k$.
\end{remark}

All the code necessary to verify the results in this paper can be found at\\
\centerline{\url{https://cs.uwaterloo.ca/~shallit/papers.html} \ .}

\section{Proofs of results}
\label{proofs}

We can now begin to {\it reprove}, and in some cases, {\it improve\/} some of the results of Brillhart and Morton.  Let us start with their Satz 2 \cite{Brillhart&Morton:1978}, reprised as Lemma 1 in 
\cite{Brillhart&Morton:1996}:
\begin{theorem}
We have
\begin{align}
s(2n) &= s(n)+t(n-1), && \quad (n \geq 1); \label{eq6}\\
s(2n+1) &= s(n)+t(n), && \quad (n \geq 0); \label{eq7} \\
t(2n) &= s(n)-t(n-1) , && \quad (n\geq 1); \label{eq8}\\
t(2n+1) &= s(n)-t(n), && \quad (n \geq 0). \label{eq9}
\end{align}
\end{theorem}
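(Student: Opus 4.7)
The plan is to verify each of the four identities mechanically using the synchronized automata {\tt rss} and {\tt rst} from Theorem~\ref{thm1}. Those automata represent $s$ and $t$ as binary relations $R_s(n,y)$ and $R_t(n,y)$ (with $n$ in base $4$ and $y$ in base $2$), so each of the four claimed equations is already expressible in first-order logic and can be handed directly to {\tt Walnut}.

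First I would translate each identity into the corresponding universally quantified implication. For instance, \eqref{eq6} becomes
\begin{verbatim}
eval eq6 "?msd_4 An,u,w (n>=1 & $rss(n,u) & $rst(n-1,w)) =>
    $rss(2*n, ?msd_2 u+w)":
\end{verbatim}
and \eqref{eq7} is analogous, with $t(n)$ in place of $t(n-1)$ and no lower bound on $n$. For \eqref{eq8} and \eqref{eq9}, which involve differences, I would recast them in additive form (e.g., $t(2n) + t(n-1) = s(n)$) to sidestep any natural-number underflow concerns:
\begin{verbatim}
eval eq8 "?msd_4 An,u,w,y (n>=1 & $rss(n,u) & $rst(n-1,w) & $rst(2*n,y))
    => ?msd_2 y+w=u":
\end{verbatim}
and similarly for \eqref{eq9}, using $2n+1$ in the last synchronized predicate. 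I would then run the four commands and expect each to return {\tt TRUE}.

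I do not expect a real mathematical obstacle here: the inductive content of Brillhart--Morton's Satz~2 has already been absorbed into the construction and verification of {\tt rss} and {\tt rst} in Theorem~\ref{thm1}, so the four identities are now just statements about the automata, not about the sequences. The only delicate points are bookkeeping -- making sure that $2n$ and $2n+1$ are interpreted in base~$4$ while the sums and differences on the value side live in base~$2$ (via the {\tt ?msd\_2} annotation), and rewriting the two subtractive identities additively so that every intermediate quantity remains in $\Enn$. Once those choices are made correctly, each identity reduces to a one-line machine check.
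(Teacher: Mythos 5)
Your proposal is correct and takes essentially the same approach as the paper: state each identity as a first-order formula over the synchronized automata {\tt rss} and {\tt rst}, rearrange the two subtractive identities into additive form to stay within $\Enn$, and let {\tt Walnut} verify them. The only cosmetic difference is that the paper introduces an explicit variable for the left-hand side (e.g., $\texttt{\$rss(2*n,x)}$ and then checks $x=y+z$), while you inline the sum directly; both are equivalent.
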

\begin{proof}
We use the following {\tt Walnut} commands:
\begin{verbatim}
eval eq3 "?msd_4 An,x,y,z (n>=1 & $rss(2*n,x) & $rss(n,y) & $rst(n-1,z)) 
   => ?msd_2 x=y+z":

eval eq4 "?msd_4 An,x,y,z ($rss(2*n+1,x) & $rss(n,y) & $rst(n,z)) 
   => ?msd_2 x=y+z":

eval eq5 "?msd_4 An,x,y,z (n>=1 & $rst(2*n,x) & $rss(n,y) & $rst(n-1,z)) 
   => ?msd_2 x+z=y":

eval eq6 "?msd_4 An,x,y,z ($rst(2*n+1,x) & $rss(n,y) & $rst(n,z)) 
   => ?msd_2 x+z=y":
\end{verbatim}
and {\tt Walnut} returns {\tt TRUE} for all of them.

For example, the {\tt Walnut} formula {\tt eq3} asserts that for all $n,x,y,z$ if
$n \geq 1$, $s(2n) = x$, $s(n) = y$, 
$t(n-1)=z$, then it must be the case
that $x = y+z$.  It is easily
seen that this is equivalent to the statement of Eq.~\eqref{eq6}.

Note that in our {\tt Walnut} proofs of Eqs.~\eqref{eq7} and \eqref{eq8}, we rearranged the statement to avoid subtractions.  This is because {\tt Walnut}'s basic domain is $\Enn$, the natural numbers, and subtractions that could potentially result in negative numbers might give anomalous results.
\end{proof}

We can now verify Satz 3 of \cite{Brillhart&Morton:1978} (Lemma 2
of \cite{Brillhart&Morton:1996}):
\begin{lemma}
For $n \geq 0$ we have
\begin{align}
s(4n) &= 2s(n)-a(n)\\
s(4n+1) &= s(4n+3) = 2s(n)\\
s(4n+2) &= 2s(n)+(-1)^na(n).
\end{align}
\label{lemma2}
\end{lemma}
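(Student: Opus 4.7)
The plan is to mirror the strategy already established in Theorem~\ref{thm1} and its corollary: encode each identity as a first-order formula over the synchronized automaton {\tt rss} for $s$ and the base-$4$ DFAO {\tt RS4} for $a$, and let {\tt Walnut} verify it. Since {\tt rss} accepts only non-negative values for its second coordinate, I will rearrange each identity algebraically so that every {\tt Walnut} arithmetic operation stays in $\Enn$, as was done for equations~\eqref{eq7} and \eqref{eq8} above.

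First I would dispose of the two identities $s(4n+1)=2s(n)$ and $s(4n+3)=2s(n)$, which make no reference to $a(n)$ and become single {\tt Walnut} assertions of the form
\begin{verbatim}
eval lemma2a "?msd_4 An,x,y ($rss(4*n+1,x) & $rss(n,y)) => ?msd_2 x=2*y":
\end{verbatim}
and similarly with {\tt 4*n+3}.

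Next I would handle the identity $s(4n)=2s(n)-a(n)$ by splitting on the value of {\tt RS4[n]}: the case $a(n)=1$ rewrites as $s(4n)+1=2s(n)$, and the case $a(n)=-1$ rewrites as $s(4n)=2s(n)+1$, each a single {\tt Walnut} command with {\tt RS4[n]=@1} (resp.\ {\tt @-1}) conjoined to the hypotheses. For $s(4n+2)=2s(n)+(-1)^n a(n)$, I would further split on the parity of $n$ using the auxiliary predicates {\tt \$even4} and {\tt \$odd4} introduced in the proof of Theorem~\ref{thm1}, producing four commands that cover the combinations of parity and sign, again with the equalities rearranged into subtraction-free form so that the sign of $(-1)^n a(n)$ determines whether $+1$ appears on the left or on the right.

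The main obstacle is purely bookkeeping: there is no genuinely new mathematical content beyond the synchronization already verified in Theorem~\ref{thm1}, but the compact statement of the lemma hides six subcases once the two-valued factors $a(n)$ and $(-1)^n$ are fully enumerated. Provided each rearrangement is done correctly, all six commands should return {\tt TRUE}, and the lemma follows at once.
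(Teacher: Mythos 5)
Your proposal is essentially identical to the paper's proof: the paper also verifies the identities by translating them into {\tt Walnut}, splitting $s(4n)=2s(n)-a(n)$ on the sign of {\tt RS4[n]} and $s(4n+2)=2s(n)+(-1)^n a(n)$ on both the sign and the parity via {\tt \$even4}/{\tt \$odd4}, with each branch rearranged into subtraction-free form. The only cosmetic difference is that the paper packs the case splits into conjunctions of implications inside a single {\tt eval} per identity rather than issuing one command per subcase.
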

\begin{proof}
We use the following {\tt Walnut} commands:
\begin{verbatim}
eval eq7 "?msd_4 An,x,y ($rss(4*n,x) & $rss(n,y)) => 
   ((RS4[n]=@1 => ?msd_2 x+1=2*y) & (RS4[n]=@-1 => ?msd_2 x=2*y+1))":

eval eq8 "?msd_4 An,x,y,z ($rss(4*n+1,x) & $rss(4*n+3,y) & $rss(n,z)) 
   => ?msd_2 x=y & x=2*z":

eval eq9 "?msd_4 An,x,y ($rss(4*n+2,x) & $rss(n,y)) => 
   (((RS4[n]=@1 & $even4(n)) => ?msd_2 x=2*y+1) &
   ((RS4[n]=@-1 & $even4(n)) => ?msd_2 x+1=2*y) &
   ((RS4[n]=@1 & $odd4(n)) => ?msd_2 x+1=2*y) &
   ((RS4[n]=@-1 & $odd4(n)) => ?msd_2 x=2*y+1))":
\end{verbatim}
and {\tt Walnut} returns {\tt TRUE} for all of them.
\end{proof}

\begin{remark}
As it turns out, Lemma~\ref{lemma2} is more or less equivalent to our synchronized automaton depicted in Figure~\ref{synch1}.  To see this, consider a synchronized automaton where the first component represents $n$ in base $4$, while the second component represents $s(n)$ in base $2$,
but using the nonstandard digit set $\{-1, 0, 1\}$ instead of
$\{0, 1\}$.   Reading a bit $i$
in the first component is like
changing the number $n$ read so
far into $4n+i$.   Theorem~\ref{lemma2} says that
$s(4n+i)$ is twice $s(n)$, plus
either $-1, 0, $ or $1$, depending on the value of $a(n)$
and the parity of $n$, both of which are (implicitly) computed
by the base-$4$ DFAO for
$a(n)$.   This gives us the
automaton depicted in
Figure~\ref{t2aut}.
\begin{figure}[H]
\begin{center}
\includegraphics[width=6.5in]{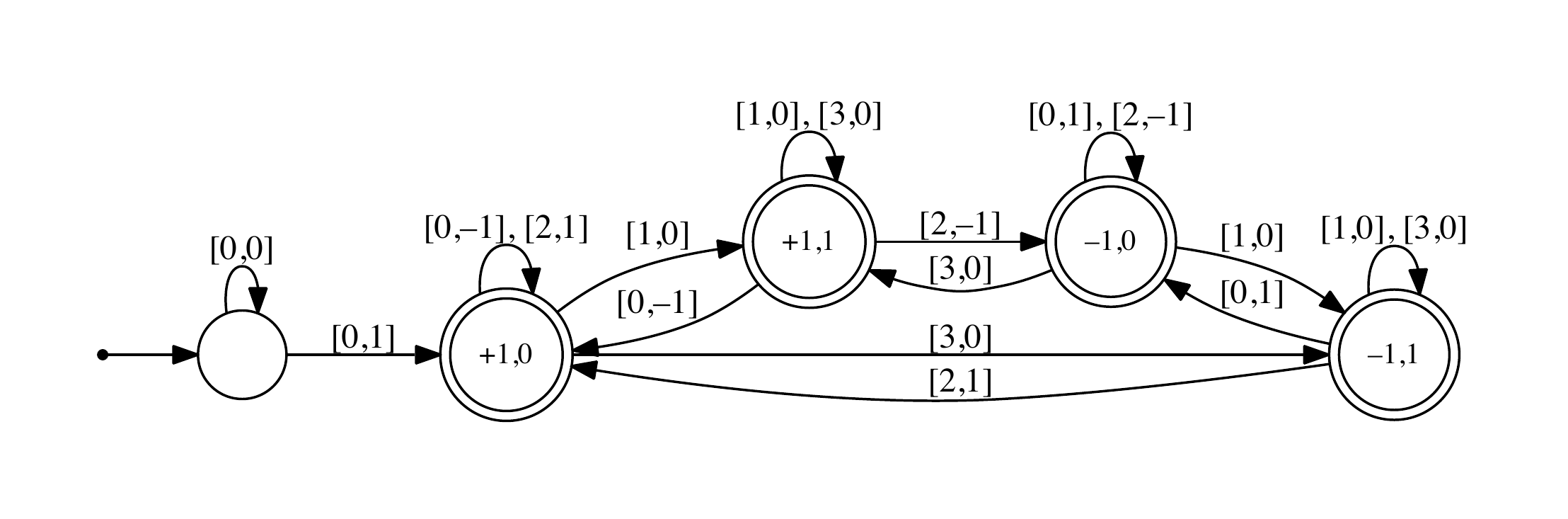}
\end{center}
\caption{Synchronized automaton implementing Theorem~\ref{lemma2}.   The state labels record $a(n)$ and $n \bmod 2$.}
\label{t2aut}
\end{figure}
To get the automaton in Figure~\ref{synch1} from this one, we would need to combine it with a ``normalizer''
that can convert a nonstandard
base-$2$ representation into a standard one.
\label{remark5}
\end{remark}

The sequence $t(n)$ satisfies a similar set of recurrences,
which are given as Satz~4 of \cite{Brillhart&Morton:1978}.

\begin{lemma}
We have
\begin{align}
t(4n) &= 2t(n-1)+a(n), && \quad (n \geq 1);\\
t(4n+1) &= 2t(n-1), && \quad (n \geq 1); \\
t(4n+2) &= t(n)+t(n-1), && \quad (n\geq 1);\\
t(4n+3) &= 2t(n), && \quad (n \geq 0).
\end{align}
\label{satz4}
\end{lemma}
\begin{proof}
We use the following {\tt Walnut} commands:
\begin{verbatim}
eval eq10 "?msd_4 An,x,y (n>=1 & $rst(4*n,x) & $rst(n-1,y)) => 
   ((RS4[n]=@1 => ?msd_2 x=2*y+1) & (RS4[n]=@-1 => ?msd_2 x+1=2*y))":

eval eq11 "?msd_4 An,x,y (n>=1 & $rst(?msd_4 4*n+1,x) &
   $rst(?msd_4 n-1,y)) => ?msd_2 x=2*y":

eval eq12 "?msd_4 An,x,y,z (n>=1 & $rst(4*n+2,x) &
   $rst(n,y) & $rst(n-1,z)) => ?msd_2 x=y+z":

eval eq13 "?msd_4 An,x,y ($rst(4*n+3,x) & $rst(n,y)) => ?msd_2 x=2*y":
\end{verbatim}
and {\tt Walnut} returns {\tt TRUE} for all of them.
\end{proof}

Next we give Theorem~1 of \cite{Brillhart&Morton:1996}.

\begin{theorem}
\leavevmode
\begin{itemize}
\item[(a)] For $k\geq1$, the minimum value of $s(n)$ for $n\in[4^k,4^{k+1}-1]$
is $2^k+1$ and $s(n)$ attains this value only when $n=4^k$ or $n=(5\cdot 4^k-2)/3$.
\item[(b)] For $k\geq0$, the maximum value of $s(n)$ for $n\in[4^k,4^{k+1}-1]$
is $2^{k+2}-1$ and $s(n)$ attains this value only when $n= M_k := \frac23(2^{2k+2}-1)$.
\end{itemize}
\label{thm4}
\end{theorem}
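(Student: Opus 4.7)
The plan is to state Theorem~\ref{thm4} as a closed first-order sentence over the synchronized automaton {\tt rss} and hand it to {\tt Walnut}, in the same spirit as the preceding verifications. The one conceptual obstacle is that the statement is indexed by the exponent $k$, which cannot be a {\tt Walnut} variable; I would get around this by replacing $k$ with the pair $(p,q)=(2^k,4^k)$, where $p$ is read in base~$2$ and $q$ in base~$4$.

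First I would observe that all the relevant numbers have simple base-$b$ descriptions. In base~$4$, the integers $4^k$, $(5\cdot 4^k-2)/3$ and $M_k=\tfrac{2}{3}(4^{k+1}-1)$ are the words $1\,0^k$, $1\,2^k$ and $2^{k+1}$, respectively; in base~$2$, the integers $2^k+1$ and $2^{k+2}-1$ are $1\,0^{k-1}1$ (for $k\geq 1$) and $1^{k+2}$. Consequently, the one piece of hand-work is a small auxiliary automaton $\mathrm{sq}(p,q)$ that accepts the pairs with $p$ of base-$2$ form $1\,0^k$ and $q$ of base-$4$ form $1\,0^k$ for some common $k$; this is a regular mixed-base relation on a handful of states, and it captures the restriction of $q=p^2$ to powers of $2$.

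For part (a), I would then run the two {\tt Walnut} queries
\begin{verbatim}
eval mina_lb "?msd_4 An,q,p,y ($sq(p,q) & q>=4 & q<=n & n<4*q
    & $rss(n,y)) => ?msd_2 y>=p+1":
eval mina_eq "?msd_4 An,q,p,y ($sq(p,q) & q>=4 & q<=n & n<4*q
    & $rss(n,y) & ?msd_2 y=p+1) => (n=q | 3*n+2=5*q)":
\end{verbatim}
in which the clause {\tt 3*n+2=5*q} is the natural-number form of $n=(5q-2)/3$, used to stay inside $\Enn$. Part (b) is analogous: I would weaken $q\geq 4$ to $q\geq 1$, replace $y\geq p+1$ by $y+1\leq 4p$, and replace the disjunction on the right-hand side by the single clause $3n+2=8q$, using the identity $3M_k+2=8\cdot 4^k$.

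The main obstacle is the construction of the synchronizer $\mathrm{sq}$, since it is the sole nonroutine step and the only one that bridges the base-$2$ and base-$4$ worlds. Once it is in hand, each of the four queries is a universally quantified sentence in Presburger arithmetic extended by the decidable predicates {\tt rss} and {\tt sq}, and {\tt Walnut} decides it by forming the product automaton and checking emptiness of the complement, which should finish well under a second given the small sizes of the automata involved.
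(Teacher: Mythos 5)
Your approach is sound and genuinely different from the paper's. Where the paper takes the ``compute and inspect'' route described in Section~\ref{automata} --- it builds the automaton {\tt rss\_int} accepting interval endpoints $(4^k,4^{k+1}-1)$, runs the commands {\tt min\_rss} / {\tt max\_rss} with $n$ and $x$ as free variables, obtains a small output automaton, and reads the answer off the accepted regular expressions --- you take the ``state and verify'' route: encode the theorem as a universally quantified sentence and ask {\tt Walnut} for {\tt TRUE}. Both are legitimate. Your auxiliary relation $\mathrm{sq}(p,q)$ is exactly what the paper assembles from {\tt power4} together with {\tt link42}, so there is nothing new to build; your mixed-base identities ($3n+2=5q$, $3M_k+2=8q$, $2^{k+2}-1=4p-1$) all check out; and using $q\le n<4q$ in place of an exponent variable $k$ is the standard device. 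The main advantage of the paper's approach is that it also \emph{discovers} the extremal $n$ rather than merely confirming them, which matters if you do not already know the answer; your approach is cleaner when, as here, the statement to be proved is already in hand.

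There is, however, a real logical gap. Your two queries establish that $s(n)\ge 2^k+1$ on the interval and that \emph{if} $s(n)=2^k+1$ then $n\in\{4^k,(5\cdot 4^k-2)/3\}$, but neither query certifies that the value $2^k+1$ is actually \emph{attained}. Both sentences would return {\tt TRUE} even if $s$ never hit $2^k+1$ on $[4^k,4^{k+1}-1]$, in which case the stated ``minimum value'' would be wrong. You need a third query for each part, e.g.\
\begin{verbatim}
eval mina_att "?msd_4 Ap,q ($sq(p,q) & q>=4) => $rss(q, ?msd_2 p+1)":
\end{verbatim}
and similarly a check that $s(M_k)=4p-1$ for part (b). (Alternatively, quote the already-verified special value $s(2^{2k})=2^k+1$ from the Beispiel 5--10 theorem.) This is precisely what the paper's approach gets for free: since {\tt min\_rss} and {\tt max\_rss} are \emph{computed} as automata rather than truth-tested, the fact that the output automaton accepts anything at all is itself the attainment statement. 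Once you add the attainment queries, your proof is complete.
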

\begin{proof}
We use the following {\tt Walnut} commands:
\begin{verbatim}
reg rss_int msd_4 msd_4 "[0,0]*[1,3][0,3]*":

eval min_rss "?msd_4 n>=4 & $rss(n, x) & Ei,j $rss_int(i,j) &
   i<=n & n<=j & (Ay,m (i<=m & m<=j & $rss(m,y)) =>
   ?msd_2 y>=x)":

eval max_rss "?msd_4 $rss(n, x) & Ei,j $rss_int(i,j) &
   i<=n & n<=j & (Ay,m (i<=m & m<=j & $rss(m,y)) =>
   ?msd_2 y<=x)":
\end{verbatim}
The output of these commands is the automata displayed in Figures~\ref{min_rss} and
\ref{max_rss}.  The automata accept pairs $((n)_4,(s(n))_2)$ where $s(n)$ is extremal for
$n$ in the specified interval.  The first automaton accepts $[0,0]^*[1,1][0,0]^*[0,1]$
and $[0,0]^*[1,1][2,0]^*[2,1]$ and the second automaton accepts $[0,0]^*[0,1][2,1][2,1]^*$.
From this one easily deduces the result.
\end{proof}

\begin{figure}[H]
\begin{center}
\includegraphics[width=5.5in]{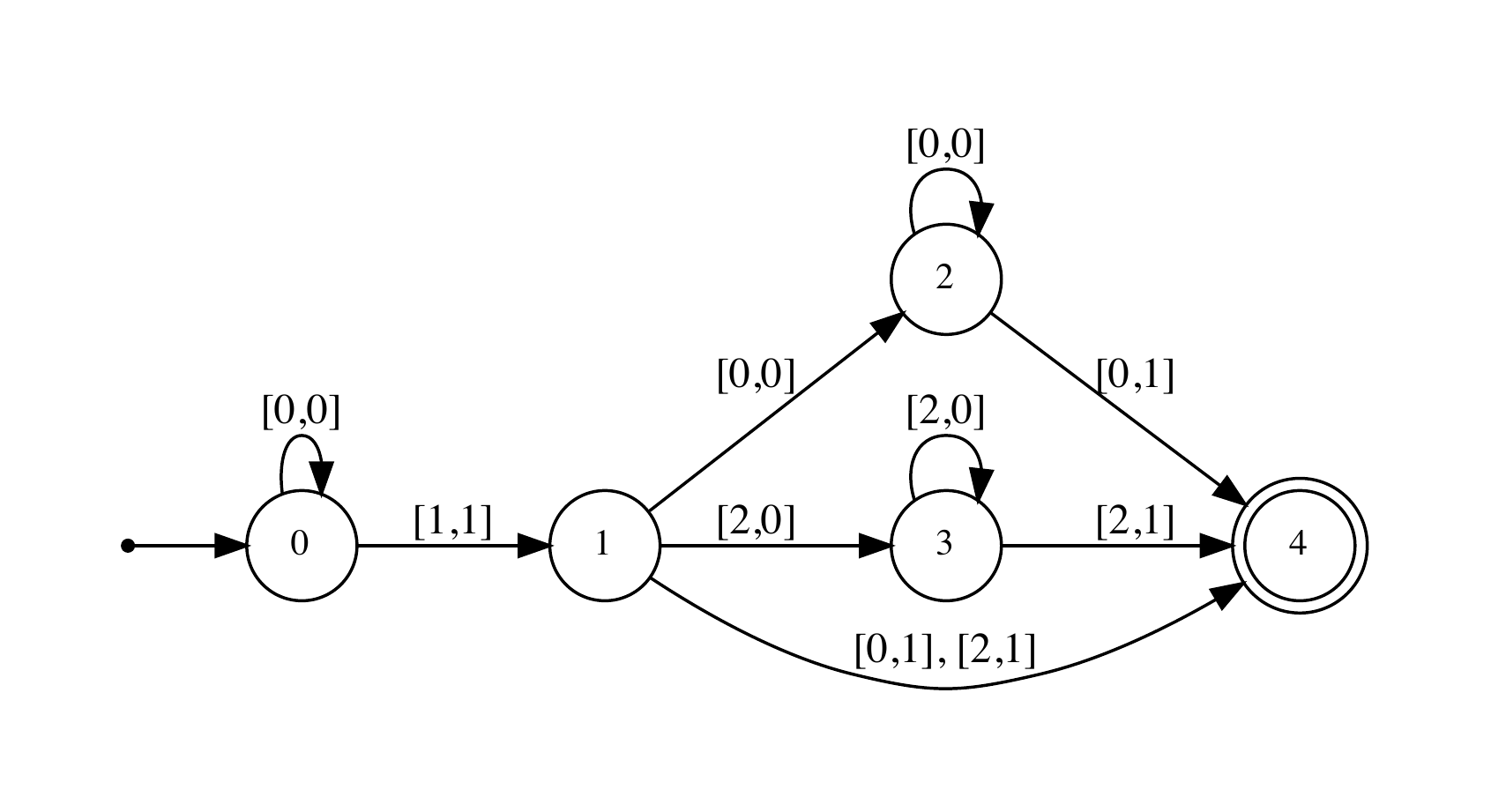}
\end{center}
\caption{Automaton for the minimum value of $s(n)$, $n\in[4^k,4^{k+1}-1]$.}
\label{min_rss}
\end{figure}

\begin{figure}[H]
\begin{center}
\includegraphics[width=5.5in]{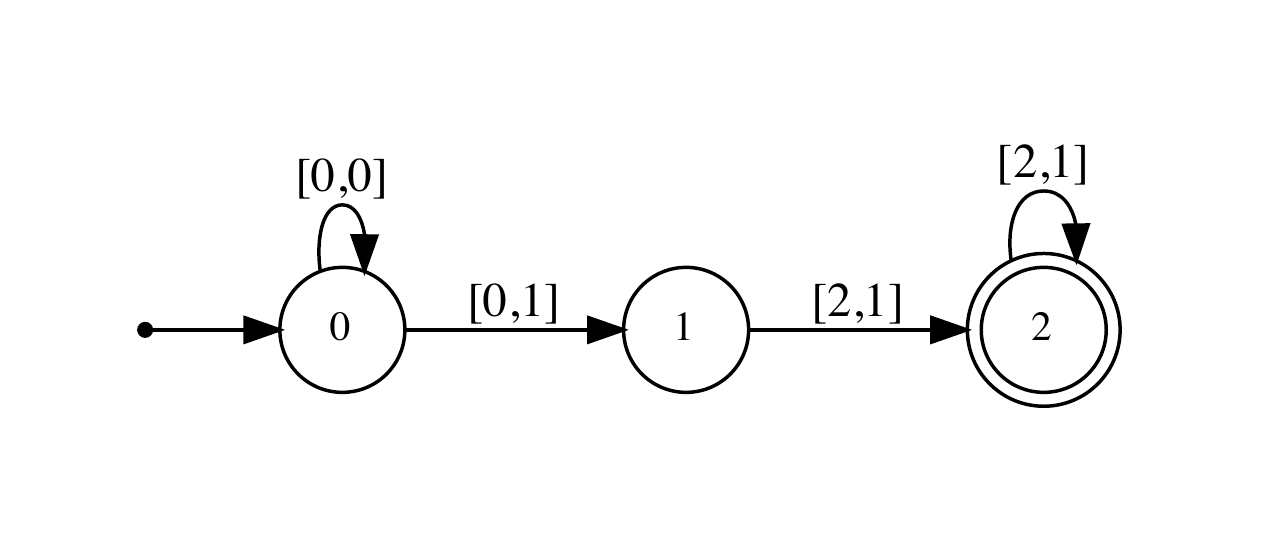}
\end{center}
\caption{Automaton for the maximum value of $s(n)$, $n\in[4^k,4^{k+1}-1]$.}
\label{max_rss}
\end{figure}

\subsection{The $\omega$ function}

Brillhart and Morton defined a function,
$\omega(k)$, as follows:   
$\omega(k)$ is the largest value of
$n$ for which $s(n) = k$.   This is sequence
\seqnum{A020991} in the OEIS.  We can create a $(2,4)$-synchronized 
automaton for
$\omega$ as follows:
\begin{verbatim}
def omega "?msd_4 $rss(n,k) & At (t>n) => ~$rss(t,k)":
\end{verbatim}
We can then create a $(2,4)$-synchronized automaton for
$\omega(n+1)-\omega(n)$ as follows:
\begin{verbatim}
def omegadiff "?msd_4 Et,u $omega((?msd_2 n+1),t) & 
   $omega(n,u) & t=x+u":
\end{verbatim}

Let us now show that
$\omega(s(n)) \leq (10n-2)/3$ for 
$n \geq 2$.  
\begin{verbatim}
def omegas "?msd_4 Ek $rss(n,k) & $omega(k,x)":
eval check_bounds "?msd_4 An,t (n>=2 & $omegas(n,t)) => 3*t+2<=10*n":
\end{verbatim}
Also this bound is optimal, because it
holds for $n = 2^{2i+1}$ and
$i \geq 0$.

Now let us prove Lemma 5 of
Brillhart and Morton \cite{Brillhart&Morton:1996}.
\begin{theorem}
We have 
\begin{align}
\omega(2n) &= 4\omega(n)+3, && \quad n \geq 1;  \label{eq26} \\
\omega(2n+1) &= 4\omega(n+1) + 2,  && \quad n \geq 2, \ 
n+1 \not= 2^r, \ r \geq 2. \label{eq27}
\end{align}
\end{theorem}

\begin{proof}
Let us prove Eq.~\eqref{eq26}.
\begin{verbatim}
eval eq14 "?msd_4 An,x,y ((?msd_2 n>=1) & $omega(n,x) & 
   $omega((?msd_2 2*n), y)) => y=4*x+3":
\end{verbatim}

Brillhart and Morton claim that
the proof of the second equality
is ``much trickier''.  However, with {\tt Walnut} it is not really much more difficult than the previous one.
\begin{verbatim}
reg power2 msd_2 "0*10*":
eval eq15 "?msd_4 An,x,y (?msd_2 n>=2 & (~$power2(?msd_2 n+1)) & 
   $omega((?msd_2 n+1),x) & $omega((?msd_2 2*n+1),y)) => y=4*x+2":
\end{verbatim}
\end{proof}

\section{More lemmas}
\label{lemmas}

Let us now prove Lemma 3
of \cite{Brillhart&Morton:1996}:
\begin{theorem}
We have
\begin{align}
s(n+2^{2k}) &= s(n)+2^k, && \quad 0 \leq n \leq 2^{2k-1} - 1, \ k \geq 1;  \label{eq17}\\
s(n + 2^{2k}) &= -s(n) + 3\cdot 2^k, && \quad 2^{2k-1} \leq n \leq 2^{2k} - 1, \ k \geq 1; \label{eq18} \\
s(n+2^{2k+1}) &= s(n) + 2^{k+1}, && \quad 0 \leq n \leq 2^{2k}-1, \ k \geq 0; \label{eq19} \\
s(n+2^{2k+1}) &= -s(n) + 2^{k+2} , && \quad 2^{2k} \leq n \leq 2^{2k+1} - 1, \ k \geq 0. \label{eq20}
\end{align}
\end{theorem}

\begin{proof}
We can prove these identities with {\tt Walnut}.  One small technical difficulty is that the
equation $x = 2^n$ is not possible to express in the particular first-order logic that {\tt Walnut} is built on; it cannot even multiply arbitrary variables, or raise a number to a power.
Instead, we assert that $x$ is a power of $2$ without exactly specifying {\it which\/} power of $2$ it is.   This brings up a further difficulty, which is that we need to simultaneously express $2^{2k}$ and $2^k$.   Normally this would also not be possible in {\tt Walnut}.  However, in this
case the former is expressed in base $4$ and the latter in base $2$, we can achieve this using the {\tt link42} automaton:
\begin{verbatim}
reg power4 msd_4 "0*10*":
reg link42 msd_4 msd_2 "([0,0]|[1,1])*":
\end{verbatim}
Here {\tt power4} asserts that its argument is a power of $4$; specifically, that its base-$4$ representation looks like $1$ followed by some number of $0$'s, and also allowing any number of leading zeros.  If this is true for $x$,
then $x = 4^k$ for some $k$, and {\tt link42} applied to the pair $(x,y)$ asserts that
$y = 2^k$ (by asserting that the base-$4$
representation $x$ is the same as the base-$2$ representation of
$y$).

To verify Eqs.~\eqref{eq17}--\eqref{eq20}, we use the
following {\tt Walnut} code:
\begin{verbatim}
eval eq16 "?msd_4 An,x,y,z ($power4(x) & x>=4 & 2*n+2<=x & 
   $rss(n,y) & $link42(x,z)) => $rss(n+x,?msd_2 y+z)":
eval eq17 "?msd_4 An,x,y,z ($power4(x) & x>=4 & 2*n>=x & n<x & $rss(n,y)
   & $link42(x,z)) => $rss(n+x,?msd_2 3*z-y)":
eval eq18 "?msd_4 An,x,y,z ($power4(x) & n<x & $rss(n,y) & $link42(x,z))
   => $rss(n+2*x,?msd_2 y+2*z)":
eval eq19 "?msd_4 An,x,y,z ($power4(x) & x<=n & n<2*x & $rss(n,y) &    
   $link42(x,z)) => $rss(n+2*x,?msd_2 4*z-y)":
 \end{verbatim}
 
 \end{proof}

We now turn to
 Lemma 4 in Brillhart and Morton \cite{Brillhart&Morton:1996}.  It is as follows (where we have corrected a typographical error in the original statement).
 \begin{theorem}
 Suppose $n \in [2^{2k}, 2^{2k+1})$.  Then 
 $s(n) \leq 2^{k+1}$, and furthermore, equality holds for $n$ in this range iff
 $n = 2^{2k+1} - 1 - \sum_{0 \leq r < k} e_r 2^{2r+1},$ where the $e_r \in \{0,1\}$.
 \end{theorem}

 \begin{proof}
 We can verify the first claim as follows:
 \begin{verbatim}
 eval lemma4 "?msd_4 An,x,y,z ($power4(x) & x<=n & n<2*x & 
    $link42(x,z) & $rss(n,y)) => ?msd_2 y<=2*z":
 \end{verbatim}
 For the second, let us create a synchronized automaton accepting the base-$4$ representation of $k$ and $n$ for which 
 $s(n) = 2^{k+1}$.
 \begin{verbatim}
 def lemma4a "?msd_4 Ez $power4(x) & x<=n & n<2*x & 
    $link42(x,z) & $rss(n,?msd_2 2*z)":
 \end{verbatim}
 By examining the result, we see that the only accepted paths are labeled with $[1,1]\{[1,0], [3,0]\}^*$.   This is easily seen
 to be the same as the claim
 in the Brillhart-Morton result.
 \end{proof}

\section{Special values}
\label{special}

Along the way, Brillhart and Morton proved a large number of results about special values of the functions $s$ and $t$.  These are very easily proved with
{\tt Walnut}.

Let us start with Examples (``Beispiel'') 5--10
of \cite{Brillhart&Morton:1978}.
\begin{theorem}
We have
\begin{itemize}
\item[(a)] $s(2^k) = 2^{\lfloor (k+1)/2\rfloor} + 1$
for $k \geq 0$;
\item[(b)] $s(2^k - 1) = 2^{\lfloor (k+1)/2\rfloor}$ for $k \geq 0$;
\item[(c)] $s(2^k - 2) = 2^{\lfloor (k+1)/2\rfloor} + (-1)^k$ for $k \geq 1$;
\item[(d)] $s(3\cdot 2^{2k} - 1) = 3\cdot 2^k$ for $k \geq 0$;
\item[(e)] $s(3 \cdot 2^{2k+1} - 1) = 2^{k+2}$ for $k \geq 0$;
\item[(f)] $t(2^{2k}) = 2^k+1$ for $k \geq 1$;
\item[(g)] $t(2^{2k+1}) = 1$ for $k \geq 0$;
\item[(h)] $t(2^{2k}-1) = 2^k$ for $k \geq 0$;
\item[(i)] $t(2^{2k+1} -1) = 0$ for $k \geq 0$;
\item[(j)] $t(2^{2k} - 2) = 2^k - 1$ for $k \geq 1$;
\item[(k)] $t(2^{2k+1} -2) = 1$ for $k \geq 0$;
\item[(l)] $t(3 \cdot 2^k -1) = 2^{\lfloor (k+1)/2\rfloor}$ for $k \geq 0$.
\end{itemize}
\end{theorem}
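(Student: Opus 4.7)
The plan is to reduce each of the twelve identities (a)--(l) to one or at most two Walnut queries, each of which should evaluate to {\tt TRUE}. The key obstacle, already confronted in the paper's treatment of Eqs.~\eqref{eq17}--\eqref{eq20}, is that Walnut cannot directly express ``$n = 2^k$'' with $k$ as a free variable. I will work around this using the predicates {\tt power4} and {\tt link42} introduced earlier: the pair $(x,z)$ satisfying ${\tt power4}(x) \land {\tt link42}(x,z)$ ranges exactly over $(4^m, 2^m)$ as $m$ varies, without ever naming $m$ in the formula.

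Every argument appearing in (a)--(l) --- namely $2^k$, $2^k-1$, $2^k-2$, $3\cdot 2^{2k}-1$, $3\cdot 2^{2k+1}-1$, and $3\cdot 2^k-1$ --- becomes an affine function of $x$ once we split, where needed, on the parity of $k$. Writing $k = 2m$ converts $2^k$ to $x$ and $2^{\lfloor(k+1)/2\rfloor}$ to $z$; writing $k = 2m+1$ converts them to $2x$ and $2z$ respectively. The right-hand sides are similarly affine in $z$. For example, part (a) becomes the pair
\begin{verbatim}
eval ex5a_ev "?msd_4 Ax,z ($power4(x) & $link42(x,z))
   => $rss(x, ?msd_2 z+1)":
eval ex5a_od "?msd_4 Ax,z ($power4(x) & $link42(x,z))
   => $rss(2*x, ?msd_2 2*z+1)":
\end{verbatim}
Parts (b), (c), and (l) require analogous even/odd pairs, while the remaining parts (d)--(k) involve exponents of the form $2k$ or $2k+1$ whose parity is already fixed, and so need only a single query each. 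The factor $(-1)^k$ in (c) is absorbed automatically by the parity split (yielding $z+1$ on the even branch versus $z-1$ on the odd branch); the constant right-hand sides in (g), (i), (k) are the trivial cases.

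The only genuine pitfalls are syntactic rather than mathematical. Walnut's natural-number semantics forces us to rewrite any potentially negative subtraction as an equivalent addition --- for instance, an identity of the form $s(n) = 3z - y$ must be encoded as $s(n) + y = 3z$ to avoid underflow on $\mathbb{N}$, exactly as we did for Eq.~\eqref{eq7} earlier. The first argument of {\tt rss}/{\tt rst} is read in base $4$ while the second is read in base $2$, so the {\tt ?msd\_2} cast must be inserted consistently in front of the second component. The lower bounds on $k$ in (c), (f), (j) translate to small size guards such as $x \geq 4$. Once these details are threaded through, each of the twelve identities is verified by a query that runs in milliseconds, and the theorem follows with no further induction.
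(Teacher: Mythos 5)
Your approach is essentially the paper's: translate each identity into a Walnut query using \texttt{power4} and \texttt{link42} to range over $(4^m,2^m)$, rewriting potentially negative subtractions as additions and inserting size guards such as $x\geq 4$, then let Walnut verify. The only cosmetic difference is that the paper defines a single synchronized regular expression \texttt{sqrtpow2} matching the pair $(2^k,\ 2^{\lfloor(k+1)/2\rfloor})$ directly (together with an auxiliary \texttt{oddpow2}), so that parts (a), (b), and (l) each need just one query, whereas you obtain the same coverage by splitting explicitly on the parity of $k$ and issuing two queries per part.
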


\begin{proof}
We can verify all of these by straightforward
translation of the assertions into {\tt Walnut}.
First let's write a formula asserting
$x = 2^k$ and $y=2^{\lfloor (k+1)/2\rfloor}$,
where the former is expressed in base $4$ and
the latter in base $2$.
\begin{verbatim}
reg sqrtpow2 msd_4 msd_2 "[0,0]*([1,1]|[0,1][2,0])[0,0]*":
\end{verbatim}
Then we can verify all the assertions as follows:
\begin{verbatim}
reg oddpow2 msd_4 "0*20*":
eval specval_a "?msd_4 Ax,y $sqrtpow2(x,y) => $rss(x,?msd_2 y+1)":
eval specval_b "?msd_4 Ax,y $sqrtpow2(x,y) => $rss(x-1,?msd_2 y)":
eval specval_c1 "?msd_4 Ax,y ($power4(x) & x>1 & $sqrtpow2(x,y))
   => $rss(x-2, ?msd_2 y+1)":
eval specval_c2 "?msd_4 Ax,y ($oddpow2(x) & $sqrtpow2(x,y)) 
   => $rss(x-2, ?msd_2 y-1)":
eval specval_d "?msd_4 Ax,y ($power4(x) & $link42(x,y)) 
   => $rss(3*x-1,?msd_2 3*y)":
eval specval_e "?msd_4 Ax,y ($oddpow2(x) & $sqrtpow2(x,y)) 
   => $rss(3*x-1,?msd_2 2*y)":
eval specval_f "?msd_4 Ax,y ($power4(x) & x>1 & $link42(x,y)) 
   => $rst(x,?msd_2 y+1)":
eval specval_g "?msd_4 Ax $oddpow2(x) => $rst(x,?msd_2 1)":
eval specval_h "?msd_4 Ax,y ($power4(x) & $link42(x,y)) 
   => $rst(x-1,?msd_2 y)":
eval specval_i "?msd_4 Ax $oddpow2(x) => $rst(x-1,?msd_2 0)":
eval specval_j "?msd_4 Ax,y ($power4(x) & x>1 & $link42(x,y)) 
   => $rst(x-2, ?msd_2 y-1)":
eval specval_k "?msd_4 Ax $oddpow2(x) => $rst(x-2,?msd_2 1)":
eval specval_l "?msd_4 Ax,y $sqrtpow2(x,y) => $rst(3*x-1,y)":
\end{verbatim}
\end{proof}

Satz~10 of \cite{Brillhart&Morton:1978} gives the values of $n$
for which $t(n)=0$.  We can find these with the Walnut command
\begin{verbatim}
def satz10 "$rst(?msd_4 n,?msd_2 0)":
\end{verbatim}
The resulting automaton appears in
Figure~\ref{satz10}.
\begin{figure}[H]
\begin{center}
\includegraphics[width=3.5in]{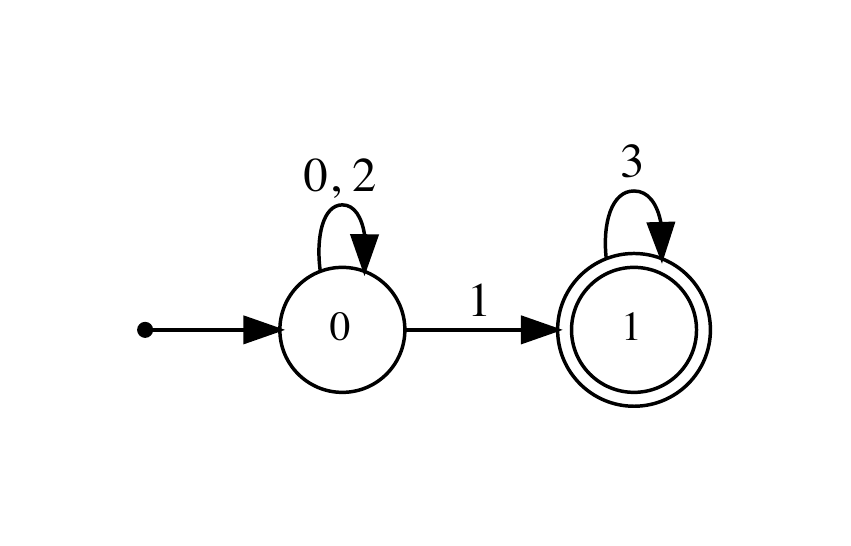}
\end{center}
\vskip -.5in
\caption{DFA accepting those $n$ for which $t(n)=0$, expressed in base $4$.}
\label{satz10}
\end{figure}
Examining this automaton gives the following:
\begin{theorem}
We have $t(n) = 0$ iff 
$(n)_4 \in 13^* \uni 
2\{0,2\}^*13^*$.
\label{nullstellensatz}
\end{theorem}

Next let us
determine those $n$ for which
$s(n) = t(n)$.   We can define the base-$4$ representation of such $n$ with the following {\tt Walnut} command:
\begin{verbatim}
def same "Ex $rss(n,x) & $rst(n,x)":
\end{verbatim}
The resulting automaton appears in
Figure~\ref{same}.
\begin{figure}[H]
\begin{center}
\includegraphics[width=3.0in]{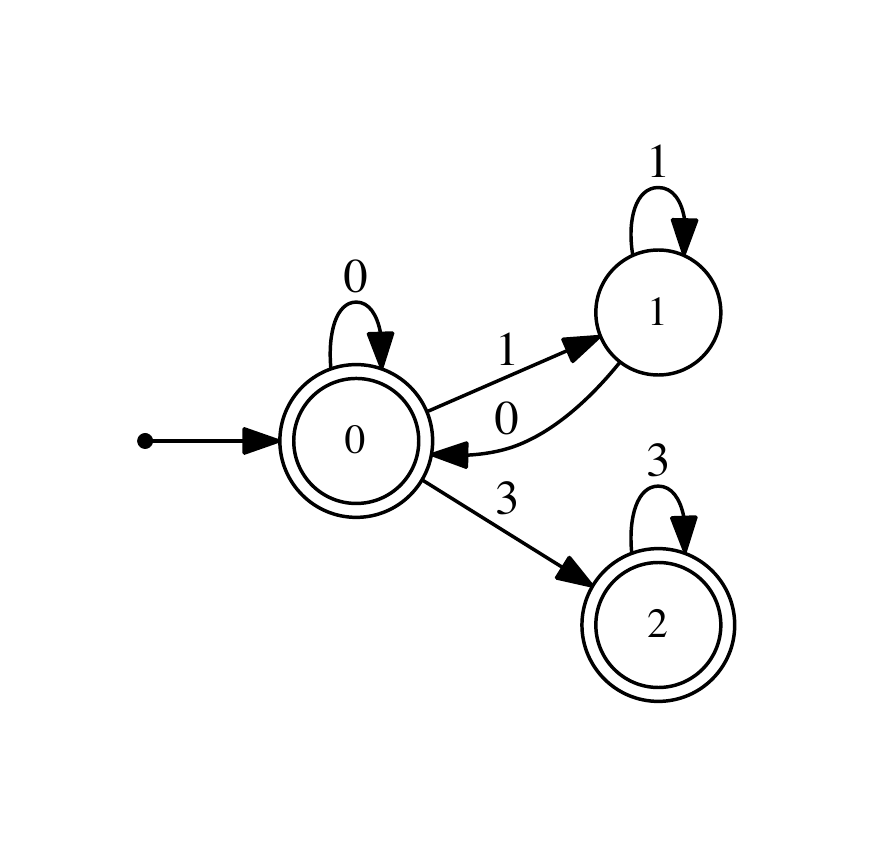}
\end{center}
\vskip -.5in
\caption{DFA accepting those $n$ for which $s(n)=t(n)$, expressed in base $4$.}
\label{same}
\end{figure}
We have therefore proved the following
result, which is given as a Zusatz to Satz~10 of \cite{Brillhart&Morton:1978}.

\begin{theorem}
We have $s(n) = t(n)$ if and only
if $n=0$ or
$(n)_4 \in 3^* \uni 1\{0,1\}^*\, 0 \, 3^*$.
\end{theorem}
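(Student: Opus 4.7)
The plan is to use exactly the synchronized-automaton strategy of Theorem~\ref{nullstellensatz}: construct a DFA for the set in question and then read off the language it recognizes. Because the synchronized automata {\tt rss} and {\tt rst} have already been verified (Theorem~\ref{thm1}), the one-line Walnut command
\begin{verbatim}
def same "Ex $rss(n,x) & $rst(n,x)":
\end{verbatim}
produces, after Walnut's built-in minimization, a DFA accepting the base-$4$ representations of exactly those $n$ with $s(n) = t(n)$. This is the automaton already displayed in Figure~\ref{same}.

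Next, I would extract a regular expression from that minimal DFA by tracing its reachable accepting paths. One expects two families: a ``pure $3$'s'' branch accepting $3^*$ (which, using the convention $(0)_4 = \epsilon$, absorbs the case $n = 0$, where $s(0) = t(0) = 1$), and a second branch that starts with $1$, loops on $\{0,1\}$, and closes with a single $0$ followed by an arbitrary suffix of $3$'s, giving $1\{0,1\}^* 0 \cdot 3^*$. The union of these two families is precisely the characterization claimed in the theorem.

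The main potential obstacle is promoting this visual reading into a rigorous proof: one could miss a subtle accepting path or mis-transcribe a transition in the picture. The clean fix is to let Walnut itself compare the machine-generated language with the candidate regular expression, using
\begin{verbatim}
reg samereg msd_4 "0*(3*|1(0|1)*03*)":
eval check_same "?msd_4 An $same(n) <=> $samereg(n)":
\end{verbatim}
The leading $0^*$ accommodates Walnut's convention of allowing leading zeros on its inputs, so that {\tt same} and {\tt samereg} describe the same subset of $\Sigma_4^*$ rather than merely agreeing on canonical representations. A {\tt TRUE} verdict closes the proof, and no induction or ad hoc argument is needed beyond the already-verified correctness of {\tt rss} and {\tt rst}.
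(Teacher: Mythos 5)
Your proposal matches the paper's proof exactly: both define the synchronized predicate {\tt same} with the identical Walnut command and read the regular expression off the resulting minimal DFA. The additional {\tt reg samereg}/{\tt eval check\_same} verification is a small but worthwhile improvement, replacing a by-eye reading of the automaton with a machine-checked equivalence.
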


From Theorem~\ref{nullstellensatz} we see that the minimum value of $t(n)$ is
$0$ and $t(n)$ takes this value infinitely often.  The next result,
which is an analogue of Theorem~\ref{thm4} (Satz~12 of \cite{Brillhart&Morton:1978}),
gives the maximum value of $t(n)$ on certain intervals.

\begin{theorem}
\leavevmode
\begin{itemize}
\item[(a)] For $k\geq1$, the maximum value of $t(n)$ for $n\in[4^k,2\cdot 4^{k}-1]$
is $2^{k+1}-1$ and $t(n)$ attains this value only when $n=4(2^{2k}-1)/3$.
\item[(b)] For $k\geq1$, the maximum value of $t(n)$ for $n\in[2\cdot 4^k,4^{k+1}-1]$
is $2^{k+1}$ and $t(n)$ attains this value only when $n= 2^{2k+2}-1$.
\end{itemize}
\label{satz12}
\end{theorem}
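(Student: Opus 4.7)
The plan is to follow exactly the template used in the proof of Theorem~\ref{thm4}, but with the synchronized automaton \texttt{rst} for $t$ and with two separate interval regular expressions, one per half of $[4^k, 4^{k+1}-1]$. In base $4$ we have $4^k = 10^k$, $2\cdot 4^k - 1 = 13^k$, $2 \cdot 4^k = 20^k$, and $4^{k+1} - 1 = 3^{k+1}$, so the two interval endpoint pairs are captured by
\begin{verbatim}
reg rst_int_a msd_4 msd_4 "[0,0]*[1,1][0,3]*":
reg rst_int_b msd_4 msd_4 "[0,0]*[2,3][0,3]*":
\end{verbatim}

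Next I would issue the $t$-analogues of \texttt{max\_rss}, including an \texttt{n>=4} guard to restrict to $k\geq 1$:
\begin{verbatim}
eval max_rst_a "?msd_4 n>=4 & $rst(n,x) & Ei,j $rst_int_a(i,j) &
   i<=n & n<=j & (Ay,m (i<=m & m<=j & $rst(m,y)) =>
   ?msd_2 y<=x)":
eval max_rst_b "?msd_4 n>=4 & $rst(n,x) & Ei,j $rst_int_b(i,j) &
   i<=n & n<=j & (Ay,m (i<=m & m<=j & $rst(m,y)) =>
   ?msd_2 y<=x)":
\end{verbatim}
Each output is an automaton whose accepted inputs are the synchronized pairs $((n)_4,(t(n))_2)$ with $t(n)$ maximum on the corresponding half-interval. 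I expect these automata to admit short regular-expression descriptions: for (a), the unique optimizer $n=4(2^{2k}-1)/3$ has base-$4$ form $1^k 0$ and the value $2^{k+1}-1$ has base-$2$ form $1^{k+1}$, aligning digit-for-digit to $[0,0]^*[1,1][1,1]^*[0,1]$; for (b), the unique optimizer $n=2^{2k+2}-1$ has base-$4$ form $3^{k+1}$ and the value $2^{k+1}$ has base-$2$ form $10^{k+1}$, aligning (after padding $n$ with one leading zero) to $[0,0]^*[0,1][3,0][3,0]^*$. A quick sanity check against Table~\ref{tab1} --- $t(4)=3$ for (a) with $k=1$, and $t(15)=4$ for (b) with $k=1$ --- confirms these expected forms.

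The main obstacle I anticipate is cosmetic: verifying that the Walnut output really does collapse to the simple two-branch regular expressions above and does not accept additional spurious pairs. Once this is checked, the linearity of each expression (one optimizer per length class) immediately yields both the value of the maximum and the uniqueness of the optimizer as functions of $k$, completing the proof.
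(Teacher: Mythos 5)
Your proposal matches the paper's proof essentially verbatim: the same two interval-defining regular expressions (the paper calls them \texttt{rst\_int1} and \texttt{rst\_int2}), the same two max-queries applied to the synchronized automaton \texttt{rst}, and the same final readout of the resulting automata as short regular expressions. In fact your predicted regular expression $[0,0]^*[0,1][3,0][3,0]^*$ for part (b) is the correct one; the paper's printed $[0,0]^*[3,0][3,0][3,0]^*$ appears to be a typographical error, since its shortest accepted string would encode the pair $(n,t(n))=(15,0)$, whereas $t(15)=4$.
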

\begin{proof}
We use the following {\tt Walnut} commands:
\begin{verbatim}
reg rst_int1 msd_4 msd_4 "[0,0]*[1,1][0,3]*":
reg rst_int2 msd_4 msd_4 "[0,0]*[2,3][0,3]*":

eval max_rst1 "?msd_4 n>=2 & $rst(n, ?msd_2 x) & Ei,j $rst_int1(i,j) &
i<=n & n<=j & (Ay,m (i<=m & m<=j & $rst(m,?msd_2 y)) => ?msd_2 y<=x)":

eval max_rst2 "?msd_4 n>=4 & $rst(n, ?msd_2 x) & Ei,j $rst_int2(i,j) &
i<=n & n<=j & (Ay,m (i<=m & m<=j & $rst(m,?msd_2 y)) => ?msd_2 y<=x)":
\end{verbatim}
The output of these commands is the automata displayed in Figures~\ref{min_rst1} and
\ref{max_rst2}.  The automata accept pairs $((n)_4,(t(n))_2)$ where $t(n)$ is extremal for
$n$ in the specified interval.  The first automaton accepts $[0,0]^*[1,1][1,1]^*[0,1]$
and the second automaton accepts $[0,0]^*[3,0][3,0][3,0]^*$.
From this one easily deduces the result.
\end{proof}

\begin{figure}[H]
\begin{center}
\includegraphics[width=5.5in]{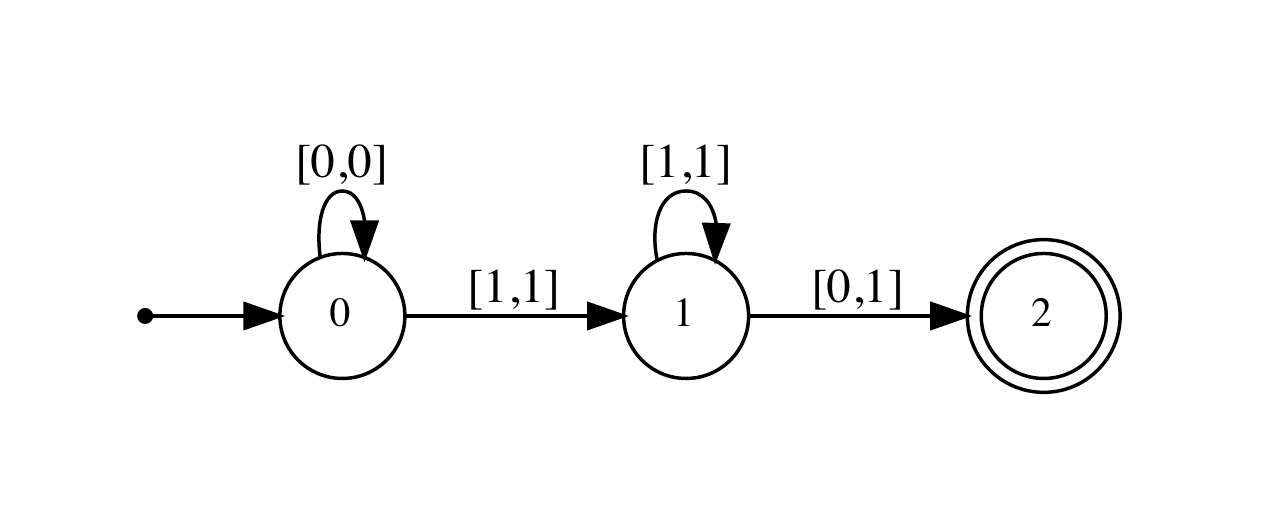}
\end{center}
\caption{Automaton for the maximum value of $t(n)$, $n\in[4^k,2\cdot 4^{k}-1]$.}
\label{min_rst1}
\end{figure}

\begin{figure}[H]
\begin{center}
\includegraphics[width=5.5in]{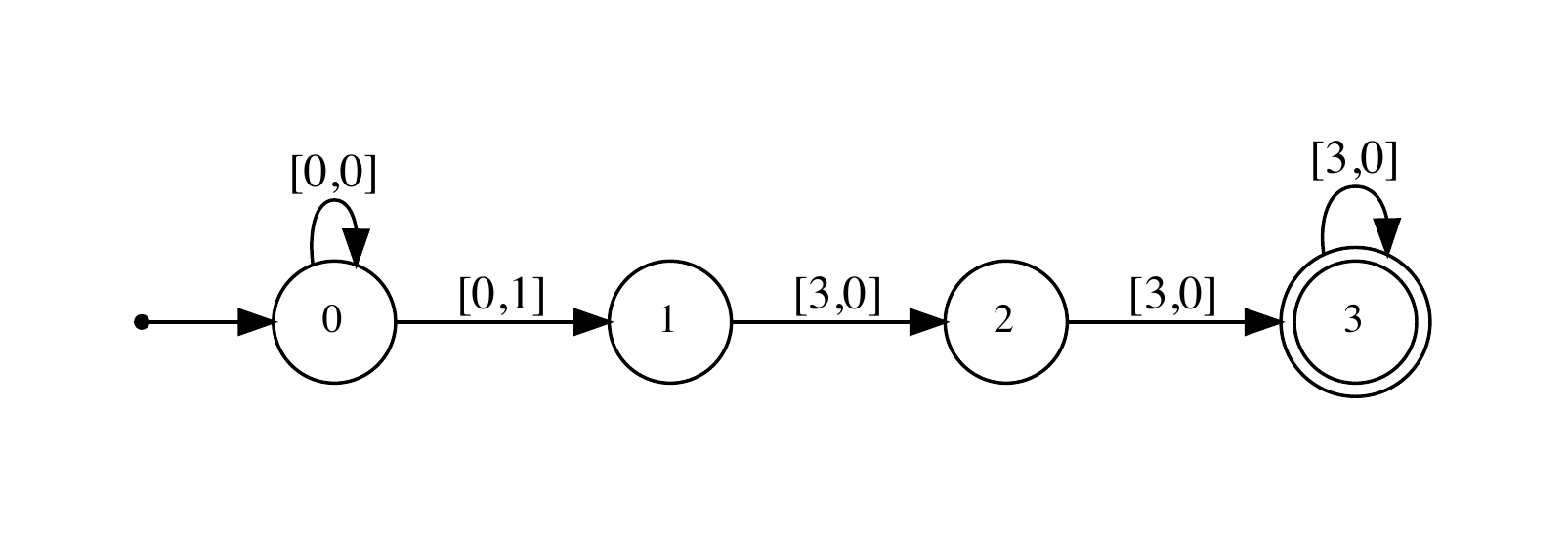}
\end{center}
\caption{Automaton for the maximum value of $t(n)$, $n\in[2\cdot 4^k,4^{k+1}-1]$.}
\label{max_rst2}
\end{figure}

Let us now prove Satz 14 of \cite{Brillhart&Morton:1978}:
\begin{theorem}
We have
\begin{itemize}
\item[(a1)] $s(4(2^{2k}-1)/3) = 2^{k+1} - 1$ for
$k \geq 0$;
\item[(a2)] $s(2^{2k+2} - 1) = 2^{k+1}$ for
$k \geq 0$;
\item[(b1)] $t(2^{2k}) = 2^k + 1$ for $k \geq 1$;
\item[(b2)] $t((5\cdot 2^{2k} - 2)/3) = 2^k - 1$
for $k \geq 0$;
\item[(c)] If $0 \leq s < 2^{k}$ then
$t(2^{2k+1} -1 - 2m(s)) = 2s$;
\item[(d)] If $0 \leq s < 2^{k}$ then
$t(2^{2k+2} - 1 - 2m(s)) = 2^{k+1} - 2s$;
\item[(e)] $t(2(2^{2k+2} -1)/3) = 1$ for $k \geq 0$.
\end{itemize}
\end{theorem}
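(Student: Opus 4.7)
The plan is to translate each of the seven assertions (a1)--(e) into first-order logic and verify them with {\tt Walnut}, using the synchronized automata {\tt rss} and {\tt rst} constructed in Section~\ref{automata} together with the helper automata {\tt power4}, {\tt link42}, {\tt sqrtpow2}, and {\tt oddpow2} already introduced in Section~\ref{special}. Since {\tt Walnut} cannot form powers of a variable, I would quantify over $x$ and $y$ linked by {\tt link42}$(x,y)$, so that $x = 4^k$ (in base $4$) and $y = 2^k$ (in base $2$), and then express each identity as a linear constraint on $x$, $y$, and the target argument $n$.

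For parts (a1), (a2), (b1), (b2), and (e), the arguments of $s$ and $t$ are simple rational expressions such as $4(4^k-1)/3$, $4^{k+1}-1$, $(5\cdot 4^k-2)/3$, and $2(4^{k+1}-1)/3$. Division by $3$ is eliminated by multiplying through: for example, $n = 4(x-1)/3$ becomes $3n+4 = 4x$. Each part then reduces to a single {\tt eval} command: for (a1) one asserts that for all $x,y,n$ with {\tt link42}$(x,y)$ and $3n+4=4x$, {\tt rss}$(n, 2y-1)$ holds; (a2) reduces to {\tt rss}$(4x-1,2y)$; (b1) to {\tt rst}$(x,y+1)$ (with $x\geq 4$); (b2) to $3n+2=5x$ implying {\tt rst}$(n,y-1)$; and (e) to $3n+2=2(4x-1)$ implying {\tt rst}$(n,1)$.

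Parts (c) and (d) are the genuinely harder ones, since they refer to an auxiliary function $m(s)$ not introduced in this excerpt. Before any verification can proceed, I would first construct a $(2,4)$-synchronized automaton for $m$, either by characterizing $m(s)$ directly via a {\tt def} command (if, as seems likely, $m(s)$ is the $s$-th index in some {\tt Walnut}-definable set) or by guessing the automaton from initial values and verifying it by a short induction, in direct analogy with the treatment of {\tt rss} and {\tt rst} in Theorem~\ref{thm1}. With $m$ available as a synchronized relation, (c) becomes the assertion that for all $x,y$ with {\tt link42}$(x,y)$ and all $s,\mu$ with $s < y$ and $m(s)=\mu$, {\tt rst}$(2x-1-2\mu,\, 2s)$ holds; (d) is the analogous statement with argument $4x-1-2\mu$ and value $2y-2s$.

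The main obstacle is therefore the construction of the synchronized automaton for $m$: identifying the right {\tt Walnut}-level description of it, and then certifying that description. Once this is in place, every one of (a1)--(e) becomes a one-line {\tt eval} on which {\tt Walnut} should return {\tt TRUE}, in complete parallel with the verifications of Beispiel 5--10 in Section~\ref{special}.
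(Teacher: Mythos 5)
Your plan for parts (a1), (a2), (b1), (b2), and (e) is exactly what the paper does: each reduces to a one-line {\tt Walnut} {\tt eval} using {\tt power4}, {\tt link42}, and the synchronized automata {\tt rss} and {\tt rst}, with the divisions by $3$ cleared by multiplying through. One small slip in (e): with $x = 4^k$, the argument $n = 2(4x-1)/3$ gives $3n = 2(4x-1)$, i.e., $3n+2 = 8x$, not $3n+2 = 2(4x-1)$ as you wrote; that version would return {\tt FALSE}.

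Your worry that a synchronized automaton for $m$ is ``the main obstacle'' is unfounded, and this is the real gap in the proposal. The function $m$ appearing in parts (c) and (d) is the pseudo-square $m(n) = [(n)_2]_4$ defined in Section~\ref{inequalities}: reinterpret the base-$2$ digits of $n$ as base-$4$ digits. This is precisely the relation already encoded by {\tt link42} (with arguments swapped), so no new automaton, no guessing, and no induction are needed. Your suggested alternative — that $m(s)$ is ``the $s$-th index in some {\tt Walnut}-definable set'' — is not what $m$ is, and building and certifying an ad hoc automaton for it would be wasted effort. Once you use the existing automaton, your translations of (c) and (d) coincide with the paper's: quantify over $x=4^k$ (base $4$), $w=2^k$ (via {\tt link42}$(x,w)$), a base-$2$ variable $s<w$, and $y=m(s)$ (via {\tt link42}$(y,s)$); then (c) is $n+2y+1 = 2x \Rightarrow t(n) = 2s$ and (d) is $n+2y+1 = 4x \Rightarrow t(n)+2s = 2w$. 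With that adjustment the whole theorem is, as you say, a sequence of one-line {\tt eval} commands.
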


\begin{proof}
The following straightforward translations of the assertions all evaluate to
{\tt TRUE} in {\tt Walnut}:
\begin{verbatim}
eval eq24a1 "?msd_4 An,x,y,z ($power4(x) & $link42(x,y) & 3*n+4=4*x & 
   $rss(n,z)) => ?msd_2 z+1=2*y":
eval eq24a2 "?msd_4 An,x,y,z ($power4(x) & $link42(x,y) & n+1=4*x & 
   $rss(n,z)) => ?msd_2 z=2*y":
eval eq24b1 "?msd_4 Ax,y,z ($power4(x) & x>1 & $link42(x,y) & $rst(x,z)) 
   => ?msd_2 z=y+1":
eval eq24b2 "?msd_4 An,x,y,z ($power4(x) & $link42(x,y) & 3*n+2=5*x & 
   $rst(n,z)) => ?msd_2 z+1=y":
eval eq24c "?msd_4 An,s,x,y,w,z ($power4(x) & $link42(x,w) & $link42(y,s) 
   & (?msd_2 s<w) & n+2*y+1=2*x & $rst(n,z)) => ?msd_2 z=2*s":
eval eq24d "?msd_4 An,s,x,y,w,z ($power4(x) & $link42(x,w) & $link42(y,s) 
   & (?msd_2 s<w) & n+2*y+1=4*x & $rst(n,z)) => ?msd_2 z+2*s=2*w":
eval eq24e "?msd_4 Ax,n ($power4(x) & 3*n+2=8*x) => $rst(n,?msd_2 1)":
\end{verbatim}
\end{proof}

\section{Inequalities}
\label{inequalities}

We showed in Theorem~\ref{thm1} that $s(n)$ and $t(n)$ are
$(4,2)$-synchronized; furthermore, $s(n)$ and $t(n)$ are both
unbounded as is easily verified with Walnut. A basic result about
synchronized sequences, namely
Theorem~8 of \cite{Shallit:2021h}, immediately implies that
there are constants $c'$ and $c''$ such that
$c' \leq s(n)/\sqrt{n} \leq c''$, and similarly for $t(n)$.  The main
accomplishment of Brillhart and Morton's paper was to determine these
constants.

\begin{theorem}[Brillhart \& Morton]
For $n \geq 1$ we have
\begin{align*}
\sqrt{3n/5} &\leq s(n) \leq \sqrt{6n} \\
0 & \leq t(n) \leq \sqrt{3n} .
\end{align*}
\label{rsbounds}
\end{theorem}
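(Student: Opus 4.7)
The plan is to square the inequalities. Since $s(n)$ and $t(n)$ are both nonnegative---the bound $t(n) \geq 0$ having already been established in the remark immediately following Theorem~\ref{thm1}---this reduces the theorem to the three polynomial bounds $3n \leq 5\,s(n)^{2}$, $s(n)^{2} \leq 6n$, and $t(n)^{2} \leq 3n$ for $n \geq 1$.

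The main obstacle is that the terms $s(n)^{2}$ and $t(n)^{2}$ multiply two unbounded variables, and are therefore not expressible directly as Walnut terms. The workaround is to prove each squared bound by strong induction on $n$ via the base-$4$ recurrences of Lemmas~\ref{lemma2} and~\ref{satz4}; these write $s(4n{+}i)$ and $t(4n{+}i)$ as $2s(n)+\varepsilon$, $2t(n)+\varepsilon'$, or simple integer combinations of $s(n)$, $t(n)$, $t(n{-}1)$, where $\varepsilon,\varepsilon' \in \{-1,0,1\}$ are determined by $a(n)$ and the parity of $n$. Squaring each such recurrence and substituting the inductive hypothesis $s(n)^{2} \leq 6n$ (and its companions for $t$) collapses the quadratic inductive step---for example $(2s(n)+\varepsilon)^{2} \leq 24n + 6i$---to a \emph{linear} inequality in $n$, $s(n)$, $t(n)$, $a(n)$, and constants. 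Each such linear inequality is expressible in Walnut through the synchronized automata \texttt{rss}, \texttt{rst}, and the DFAO \texttt{RS4}, and can be verified by a single \texttt{eval} command. All three bounds need to be carried in tandem, since each base-$4$ recurrence mingles $s$ and $t$ through $a(n)$.

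The hard part will be that the residual linear inequalities do not uniformly close: at the near-extremal configurations of Theorems~\ref{thm4} and~\ref{satz12}, the slack $6n - s(n)^{2}$ (respectively $3n - t(n)^{2}$) is small and of the same order as the cross term $4\varepsilon\,s(n)$ arising from expanding $(2s(n)+\varepsilon)^{2}$. What rescues the argument is that the particular combinations of $i$, $\varepsilon$, $a(n)$, and $n \bmod 2$ appearing at those critical $n$ are tightly constrained by the Rudin--Shapiro recursion; these structural side conditions are finite properties of the synchronized automata and can be settled by Walnut \texttt{eval}s in the style of the correctness checks in Theorem~\ref{thm1}. Once they are in place the induction closes, and---in keeping with the spirit of the paper---the whole proof reduces to a short Walnut script.
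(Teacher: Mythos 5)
There is a genuine gap in your inductive step, and you essentially acknowledge it without resolving it. Consider the upper bound $s(m)^2 \leq 6m$ at $m = 4n$ when $a(n) = -1$: Lemma~\ref{lemma2} gives $s(4n) = 2s(n) + 1$, so you must show $(2s(n)+1)^2 = 4s(n)^2 + 4s(n) + 1 \leq 24n$. Substituting the inductive hypothesis $4s(n)^2 \leq 24n$ leaves the residual requirement $4s(n) + 1 \leq 0$, which, since $s(n)\geq 1$, is \emph{false for every} $n$ with $a(n) = -1$, not merely at a few critical configurations. The symmetric failure occurs on the lower bound when $a(n) = 1$: there the residual is $-20s(n)+5\geq 0$. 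Your proposed rescue---that ``structural side conditions'' restricting which combinations of $i$, $\varepsilon$, $a(n)$, $n\bmod 2$ actually occur can be settled by Walnut---misdiagnoses the defect: there is no forbidden combination to exploit; the inductive hypothesis is simply too weak. What is really needed is a quantitative margin, e.g., $\bigl(s(n)+\tfrac12\bigr)^2 \leq 6n$ whenever $a(n)=-1$, and any such strengthening reintroduces the quadratic term $s(n)^2$ that Walnut cannot express, which is precisely the obstacle you set out to avoid.

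The paper circumvents this with a different device: the ``pseudo-square'' $m(k) = [(k)_2]_4$, which is synchronized and satisfies $(k^2+2k)/3 \leq m(k) \leq k^2$ (Lemma~\ref{pseudosquare}). Statements like $m(s(n)) \leq 3n+1$ and $(3n+7)/5 \leq m(s(n))$ are first-order and checked directly by Walnut (Lemma~\ref{lemma13}); combined with the pseudo-square sandwich they give the lower bound $s(n) \geq \sqrt{(3n+7)/5}$ at once and the weaker upper bound $s(n) \leq \sqrt{9n+3}$. To sharpen the latter to $\sqrt{6n}$ the paper partitions $n$ into three families (the complement of the exceptional set, the intervals $I_k$, and the intervals $J_{k,i}$) and proves interval-wise maxima for $s$ by Walnut strong enough that $s(n)^2/n \leq 6$ on each piece; the $t$ bound then follows from $m(t(n)) \leq n+1$ together with Eqs.~\eqref{eq6}--\eqref{eq7}. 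Your squared-recurrence idea would become viable only if it were combined with the pseudo-square machinery (or an equivalent encoding) so that a strengthened, effectively quadratic inductive invariant can be carried through Walnut.
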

Trying to prove these results by {\it directly\/}  translating the claims into {\tt Walnut} leads to two difficulties:  first, automata cannot compute squares or square roots.
Second, our synchronized automata
work with $n$ expressed in base $4$, but
$s(n)$ and $t(n)$ are expressed in base $2$, and {\tt Walnut} cannot directly compare arbitrary integers expressed in different bases.

However, there is a way around both of these difficulties.
First, we define a kind of ``pseudo-square" function as follows:   $m(n) = 
[(n)_2]_4$.   In other words, $m$ sends $n$ to the integer obtained by interpreting the base-$2$
expansion of $n$ as a number in 
base $4$.  Luckily we have already defined an automaton for $m$ called {\tt link42}; to get an automaton for $m$, we only have to reverse the order of the arguments in {\tt link42}!  

Now we need to see how far away from a real squaring function our pseudo-square function $m(n)$ is.
\begin{lemma}
We have $(n^2+2n)/3 \leq m(n) \leq n^2$.
\label{pseudosquare}
\end{lemma}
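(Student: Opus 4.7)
The plan is to prove the two inequalities by induction on $n$, exploiting the simple recurrence satisfied by $m$. Writing $n = 2n' + b$ with $b \in \{0,1\}$ amounts to appending the bit $b$ to the binary expansion of $n'$; interpreting this string in base $4$ shows at once that $m(2n' + b) = 4\, m(n') + b$. This clean recurrence is what makes the inductive approach natural, since it lines up with the quadratic growth of the bounds.

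First I would dispose of the base cases $n = 0$ and $n = 1$ by direct calculation (both bounds are tight there). For the inductive step, write $n = 2n' + b$ and compute $n^2 = 4n'^2 + 4n'b + b^2 = 4n'^2 + 4n'b + b$ using $b^2 = b$. For the upper bound, the inductive hypothesis gives $m(n) = 4m(n') + b \leq 4n'^2 + b$, and comparison with $n^2$ reduces the claim to $0 \leq 4 n' b$, which is immediate. For the lower bound, the hypothesis gives $m(n) \geq (4n'^2 + 8n')/3 + b$, and comparing with $(n^2 + 2n)/3 = (4n'^2 + 4n'b + 4n' + 3b)/3$ reduces the claim to $4n' \geq 4n'b$, again obvious since $b \leq 1$.

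There is no real obstacle here: the recurrence $m(2n'+b) = 4m(n') + b$ meshes perfectly with the target bounds, and both inequalities reduce after one line of algebra to trivial statements about $b \in \{0,1\}$. The only thing worth checking carefully is that the bounds are actually tight, which explains why no slack appears: the upper bound is attained at $n = 2^k$ (whose base-$2$ string is $1 0^k$, giving $m(n) = 4^k = n^2$), while the lower bound is attained at $n = 2^{k+1}-1$ (whose base-$2$ string is $1^{k+1}$, giving $m(n) = (4^{k+1}-1)/3 = n(n+2)/3$). I would mention this at the end as a sanity check on the statement and to motivate why the constant $1/3$ cannot be improved.

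As a brief aside, one could also give a non-inductive proof of the upper bound by noting that if $n = \sum_i b_i 2^i$ then $n^2 = \sum_i b_i 4^i + 2 \sum_{i<j} b_i b_j 2^{i+j} = m(n) + 2\sum_{i<j} b_i b_j 2^{i+j}$, which makes $m(n) \leq n^2$ transparent; but the lower bound does not simplify as cleanly this way, so induction remains the most uniform route.
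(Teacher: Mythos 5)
Your proof is correct and follows the same route as the paper's: an induction on $n$ driven by the recurrence $m(2n'+b)=4m(n')+b$, which is exactly the even/odd case split the paper writes out (you merely merge the two cases into one by carrying the bit $b$ symbolically). The added tightness check at $n=2^k$ and $n=2^{k+1}-1$ is a nice sanity observation but not part of the paper's argument.
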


\begin{proof}
We can prove the bounds by induction
on $n$.  They are clearly true for
$n = 0$.   Assume $n\geq 1$ and the inequalities hold for
all $n'< n$; we prove them for 
$n$.

Suppose $n$ is even.  Then
$n = 2k$.  Clearly $m(n) = 4m(k)$.
By induction we have
$(k^2+2k)/3 \leq m(k) \leq k^2$,
and multiplying through by $4$ gives
$$ (n^2+2n)/3 = (4k^2+4k)/3 < 4(k^2+2k)/3 \leq 4m(k) \leq 4k^2 = n^2.$$

Suppose $n$ is odd.  Then $n = 2k+1$.
Clearly $m(n) = 4m(k)+1$.  By induction we have
$(k^2+2k)/3 \leq m(k) \leq k^2$.
Multiplying by $4$ and adding $1$ gives
$$(n^2+2n)/3 = 
((2k+1)^2 + 2(2k+1))/3 =
(4k^2 + 8k+ 3)/3 \leq 4m(k)+1  \leq 4k^2+1 \leq (2k+1)^2 = n^2,$$ as desired.
\end{proof}

We can now prove:
\begin{lemma}
For $n \geq 1$ we have
${{3n+7}\over 5} \leq m(s(n)) \leq 3n+1$, and the upper and lower bounds
are tight.
\label{lemma13}
\end{lemma}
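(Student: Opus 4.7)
The plan is to express both bounds in first-order logic and let {\tt Walnut} verify them, using {\tt rss} together with {\tt link42} to bridge the different bases. Recall that {\tt link42}$(x,y)$ accepts a base-$4$ representation of $x$ paired with the base-$2$ representation of $y$ under the rule that the two digit-strings are identical; this is exactly the condition $x = m(y) = [(y)_2]_4$. Therefore, for any $n$, the conjunction ${\tt rss}(n,y) \land {\tt link42}(z,y)$ characterises $z = m(s(n))$, using $y$ as the common base-$2$ carrier that lets the two automata talk to each other.

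To avoid subtraction (since {\tt Walnut}'s domain is $\Enn$), I would rewrite $\tfrac{3n+7}{5} \leq m(s(n)) \leq 3n+1$ as $3n+7 \leq 5\,m(s(n))$ and $m(s(n)) \leq 3n+1$. The two inequalities then become the commands

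\begin{verbatim}
eval lemma13a "?msd_4 An,y,z (n>=1 & $rss(n,y) & $link42(z,y))
   => z<=3*n+1":
eval lemma13b "?msd_4 An,y,z (n>=1 & $rss(n,y) & $link42(z,y))
   => 3*n+7<=5*z":
\end{verbatim}

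\noindent and I expect {\tt Walnut} to return {\tt TRUE} for both, since all of the arithmetic on the right-hand sides is linear in $n$ and $z$ and so within {\tt Walnut}'s expressive power.

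For the tightness claim, I would build automata picking out exactly the $n$ attaining equality in each bound and then check that these sets are unbounded (equivalently, infinite):

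\begin{verbatim}
def upper_tight "?msd_4 Ey,z $rss(n,y) & $link42(z,y) & z=3*n+1":
def lower_tight "?msd_4 Ey,z $rss(n,y) & $link42(z,y) & 5*z=3*n+7":
eval upper_unbd "?msd_4 Am En (n>=m & $upper_tight(n))":
eval lower_unbd "?msd_4 Am En (n>=m & $lower_tight(n))":
\end{verbatim}

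The main obstacle is conceptual rather than computational: one has to notice that the nonlinear inequalities $\sqrt{3n/5}\leq s(n)\leq\sqrt{6n}$ of Theorem~\ref{rsbounds} become linear after applying $m$ to both sides, since $m$ approximates squaring up to the constant factors established in Lemma~\ref{pseudosquare}. Once the bounds are restated as linear inequalities in $n$ and $m(s(n))$, the rest is a mechanical synchronized-automaton computation, and the tightness witnesses are read off directly from the resulting automata.
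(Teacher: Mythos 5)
Your proposal is correct and matches the paper's proof essentially line for line: the paper defines an auxiliary synchronized automaton \texttt{maps} via \texttt{\$rss(n,x) \& \$link42(y,x)} and then runs the same two linear-inequality checks and the same two unboundedness checks for tightness. The only difference is that you inline the definition of $m(s(n))$ rather than naming it, which changes nothing of substance.
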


\begin{proof}
We use the {\tt Walnut} code
\begin{verbatim}
def maps "?msd_4 Ex $rss(n,x) & $link42(y,x)":
eval ms_lowerbnd "?msd_4 An,y (n>=1 & $maps(n,y)) => y<=3*n+1":
eval ms_upperbnd "?msd_4 An,y (n>=1 & $maps(n,y)) => 3*n+7<=5*y":
\end{verbatim}

To show they are tight, let us show there are infinitely many solutions to $m(s(n)) = 3n+1$ and $m(s(n)) = (3n+7)/5$:
\begin{verbatim}
eval lowerbnd_tight "?msd_4 Am En,y (n>m) & $maps(n,y) & y=3*n+1":
eval upperbnd_tight "?msd_4 Am En,y (n>m) & $maps(n,y) & 5*y=3*n+7":
\end{verbatim}
\end{proof}

As a consequence, we get one lower bound in Theorem~\ref{rsbounds}.
\begin{corollary}
For $n \geq 1$ we have
$$ s(n) \geq \sqrt{{3n+7}\over 5}  .$$
\end{corollary}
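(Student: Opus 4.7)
The plan is to chain the two inequalities we have just established. From Lemma~\ref{lemma13} we know that $m(s(n)) \geq (3n+7)/5$ for $n \geq 1$, and from Lemma~\ref{pseudosquare} we know that $m(k) \leq k^2$ for every nonnegative integer $k$. Applying the latter with $k = s(n)$ gives $s(n)^2 \geq m(s(n))$, so combining yields $s(n)^2 \geq (3n+7)/5$.

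Since $s$ takes only nonnegative integer values (as verified in Remark following Theorem~\ref{thm1}, where the synchronized automata are confirmed to represent well-defined functions on $\mathbb{N}$), we may take square roots on both sides without worrying about signs, obtaining the stated bound $s(n) \geq \sqrt{(3n+7)/5}$.

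The proof is essentially a two-line calculation, so there is no real obstacle; all the substantive work has already been done in Lemmas~\ref{pseudosquare} and~\ref{lemma13}. The one thing worth noting is the philosophical role of the pseudo-square function $m$: it is precisely the device that lets us translate a statement about an actual square root (which \texttt{Walnut} cannot express) into a statement about a base-change, which \texttt{Walnut} handles trivially via the \texttt{link42} automaton. Thus the corollary is really a two-step decoding: Lemma~\ref{lemma13} gives the sharp bound on $m(s(n))$ that \texttt{Walnut} proves directly, and Lemma~\ref{pseudosquare} translates this bound on $m$ back into a bound on the genuine square of $s(n)$.
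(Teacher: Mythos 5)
Your proof is correct and follows exactly the paper's argument: apply Lemma~\ref{pseudosquare} with $k=s(n)$ to get $m(s(n)) \leq s(n)^2$, combine with the lower bound $\tfrac{3n+7}{5} \leq m(s(n))$ from Lemma~\ref{lemma13}, and take square roots. The extra remark about $s(n)\geq 0$ and the conceptual role of $m$ is accurate but not needed beyond what the paper already states.
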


\begin{proof}
From Theorem~\ref{pseudosquare} we have
$m(s(n)) \leq s(n)^2$ and from Lemma~\ref{lemma13} we have
${{3n+7}\over 5} \leq m(s(n))$. 
Putting these two bounds together gives
${{3n+7}\over 5} \leq s(n)^2$.
\end{proof}
Note that our lower bound is actually slightly {\it stronger\/} than that of Brillhart-Morton!

\bigskip

To get the upper bound $s(n) \leq \sqrt{6n}$, as in Brillhart-Morton, we need to do more work, since the results we have proved so far only suffice to show that $s(n) \leq \sqrt{9n+3}$.  To get their upper bound, Brillhart and Morton carved the various intervals for $n$ up into three classes and proved the upper bound of $\sqrt{6n}$ for each class.  We'll do the same thing, but use slightly different classes.
By doing so we avoid their complicated induction entirely.

The first class is the easiest:  those $n$ for which $m(s(n))\leq 2n$.   For these $n$, Lemma~\ref{pseudosquare} immediately gives us
$s(n)^2 \leq 6n$, as desired.
Furthermore, the ``exceptional set'' (that is, those $n$ for 
which $m(s(n)) > 2n$) is calculatable with {\tt Walnut}:
\begin{verbatim}
def exceptional_set "?msd_4 Em $maps(n,m) & m>2*n":
\end{verbatim}
The resulting automaton is quite simple (2 states!) and recognizes the
set of base-$4$ expansions
$\{0,2\}^* \uni \{0,2\}^* \ 1 \ \{1,3\}^*$.

We readily see, then, that the
exceptional set consists of
\begin{itemize}
\item[(a)]
numbers whose base-$4$ expansion starts with a $1$ and thereafter consists of $1$'s and $3$'s, and 
\item[(b)] the rest, which must start with a $2$.
\end{itemize}
The numbers in group (a) are easiest to deal with, because
they satisfy the inequality
$M_k/2 \leq n < 2^{2k+1}$
for some $k\geq 0$.  (Recall
that $M_k= (2^{2k+3} - 2)/3$ was defined in
Theorem~\ref{thm4}.)
Now for all $n$ (not just those in the exceptional set) 
in the half-open interval $I_k := [M_k/2,\ 2^{2k+1})$ we can show with
{\tt Walnut} that 
$s(n) \leq 2^{k+1}$, as follows:
\begin{verbatim}
eval maxcheck "?msd_4 An,x,y,z ($power4(x) & 3*n+1>=4*x & n<2*x 
   & $rss(n,y) & $link42(x,z)) => ?msd_2 y<=2*z":
\end{verbatim}
So for all $n \in I_k$ we have
$$ {{s(n)^2} \over {n}} \leq
{ (\max_{n \in I_k} s(n))^2
\over
{\min_{n \in I_k} n}} = 
{{(2^{k+1})^2}\over {M_k/2}} = 3 {{2^{2k+2}}\over{2^{2k+2} - 1}}  
\leq 4.$$
This handles the numbers in group (a).

Finally, we turn to group (b),
which are the hardest to deal with.   These numbers lie in the interval $I'_k = [2^{2k+1}, M_k]$.
We will split these numbers into the following intervals:
$J_{k,i} := [M_k - M_i, M_k - M_{i-1})$
for $0 \leq i < k$.  Since
$M_k - M_{k-1} = 2^{2k+1}$,
the union 
$$J_{k,0} \uni J_{k,1} \uni \cdots \uni J_{k,k-1} \uni \{M_k \}$$ forms a disjoint partition of the interval $I'_k$.  

Now with {\tt Walnut} we can prove that for $n \in J_{k,i}$
we have $s(n) \leq 2^{k+2} -2^{i+1}$.
\begin{verbatim}
eval J_inequality "?msd_4 An,x,y,z,w,m ($rss(n,m) & $power4(x) & $power4(y) 
   & x>y & $link42(x,w) & $link42(y,z) & 8*x<=3*n+8*y & 3*n+2*y<8*x) 
   => ?msd_2 m+2*z<=4*w":
\end{verbatim}

It now follows that
for $n \in J_{k,i}$, $k \geq 1$, and $0 \leq i < k$, we have
$$
{{s(n)^2} \over{n }}
\leq {{(\max_{n\in J_{k,i}} s(n))^2} \over {\min_{n\in J_{k,i}} n}} 
\leq {{(2^{k+2} - 2^{i+1})^2} \over {M_k - M_i}} 
$$ 
and a routine manipulation\footnote{Here are the details.   Since $k \geq 1$ and $0 \leq i < k$ we clearly have
$2^{k+2} > 5\cdot 2^i = 2^{i+2} + 2^i$.  Multiplying by $2^{i+2}$ gives us
$2^{k+i+4} > 2^{2i+4} + 2^{2i+2}$.   Adding $2^{2k+4}$ to both sides, and rearranging gives $2^{2k+4} - 2^{k+i+4} + 2^{2i+2} < 2^{2k+4} - 2^{2i+4}$.  In other words,
$(2^{k+2}-2^{i+1})^2 < 2 (2^{2k+3} - 2^{2i+3})$.
Hence $(2^{k+2} - 2^{i+1})^2/(2^{2k+3} - 2^{2i+3}) < 2$, and so
$(2^{k+2}-2^{i+1})^2/(M_k -M_i) < 6$.}
shows this is less than $6$.

The only remaining case is
$M_k$.  But then $s(M_k) = 2^{k+2} - 1$, and then
$s(M_k)^2 <6 M_k$ by another
routine calculation.

Finally, we should verify that we have really covered all the possible $n$:
\begin{verbatim}
def left_endpoint "?msd_4 3*z+8*y=8*x":
def right_endpoint "?msd_4 3*z+2*y=8*x":
eval check_all "?msd_4 An (n>=1) => ((~$exceptional_set(n)) |
   (Ex $power4(x) & 4*x<=3*n+1 & n<2*x) |
   (Ex,y,z,w $power4(x) & $power4(y) & x>y & 
   $left_endpoint(x,y,z) & $right_endpoint(x,y,w) & n>=z & n<w) | 
   (Ex $power4(x) & 3*n+2=8*x))":
\end{verbatim}
which evaluates to {\tt TRUE}.

Thus we have proved one upper bound from
Theorem~\ref{rsbounds}:
\begin{theorem}
$s(n) \leq \sqrt{6n}$ for $n \geq 1$.
\end{theorem}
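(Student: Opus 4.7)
The plan is to combine the pieces developed in the preceding paragraphs into a three-case exhaustive argument. The first case handles every $n$ for which $m(s(n)) \leq 2n$: by Lemma \ref{pseudosquare} applied to $s(n)$ we have $s(n)^2/3 \leq (s(n)^2 + 2s(n))/3 \leq m(s(n))$, so in this case $s(n)^2 \leq 3 \cdot 2n = 6n$ immediately. The two-state automaton for the exceptional set $\{n : m(s(n)) > 2n\}$ already computed shows that its base-$4$ language is $\{0,2\}^* \cup \{0,2\}^*\,1\{1,3\}^*$, which splits cleanly into group (a) (leading $1$ followed by $\{1,3\}^*$) and group (b) (starting with digit $2$).

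For group (a), the numbers fit into intervals $I_k = [M_k/2,\, 2^{2k+1})$. The {\tt Walnut}-checkable bound $s(n) \leq 2^{k+1}$ on $I_k$ gives $s(n)^2/n \leq 2^{2k+2}/(M_k/2) = 3 \cdot 2^{2k+2}/(2^{2k+2}-1) \leq 4 \leq 6$. For group (b), the numbers live in $I'_k = [2^{2k+1},\, M_k]$, and I would further subdivide as $I'_k = \{M_k\} \cup \bigcup_{0 \leq i < k} J_{k,i}$ with $J_{k,i} = [M_k - M_i,\, M_k - M_{i-1})$. On each $J_{k,i}$ the bound $s(n) \leq 2^{k+2} - 2^{i+1}$ is again a single {\tt Walnut} check, and an elementary algebraic manipulation shows $(2^{k+2} - 2^{i+1})^2 < 6(M_k - M_i)$. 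The singleton $n = M_k$ is handled by the known special value $s(M_k) = 2^{k+2} - 1$ together with a direct check that $(2^{k+2} - 1)^2 < 6 M_k$.

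The closing step is to verify that this partition really does cover all $n \geq 1$: a short {\tt Walnut} formula asserts that every positive $n$ either falls outside the exceptional set, or lies in some $I_k$, or lies in some $J_{k,i}$, or equals some $M_k$. The one genuine obstacle is that a naive combination of Lemmas \ref{pseudosquare} and \ref{lemma13} yields only $s(n) \leq \sqrt{9n+3}$, so tightening the constant to $6$ forces one to treat the exceptional set with its own finer estimates; the partition into the $I_k$ and $J_{k,i}$ intervals is engineered precisely so the ratio $s(n)^2/n$ stays below $6$ on every piece, and once that is achieved the theorem falls out by assembling the three cases.
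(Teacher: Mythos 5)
Your proposal is correct and follows the paper's argument essentially line for line: the same split into the case $m(s(n))\le 2n$ handled by Lemma~\ref{pseudosquare}, the same two-state exceptional set automaton and its regular expression, the same intervals $I_k=[M_k/2,\,2^{2k+1})$ and $J_{k,i}=[M_k-M_i,\,M_k-M_{i-1})$ with the singleton $M_k$, the same {\tt Walnut} bounds $s(n)\le 2^{k+1}$ and $s(n)\le 2^{k+2}-2^{i+1}$, and the same final coverage check. There is no substantive difference from the paper's proof.
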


Using exactly the same techniques we can prove
\begin{lemma}
For all $n \geq 0$ we
have $m(t(n)) \leq n+1$,
with equality iff 
$(n)_4 \in (0 \uni 11^*0)^*3^*$.
\label{thm17}
\end{lemma}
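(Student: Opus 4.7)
The proof will mirror the structure used for Lemma~\ref{lemma13}. Recall that the pseudo-square $m$ is computed by reversing the arguments of the synchronized automaton {\tt link42}: on input $(y,x)$ it accepts iff $m(y) = x$, equivalently $y = [(x)_2]_4$. So I would first build a $(4,2)$-synchronized-style automaton {\tt mapt} for the composition $m \circ t$, analogous to the automaton {\tt maps} defined before Lemma~\ref{lemma13}:
\begin{verbatim}
def mapt "?msd_4 Ex $rst(n,x) & $link42(y,x)":
\end{verbatim}
Here the first coordinate is $n$ in base~$4$ and the second is $m(t(n))$ in base~$4$, since $t(n)$ is given in base~$2$ by {\tt rst} and {\tt link42} converts a base-$2$ string on its right coordinate to its base-$4$ reinterpretation on the left.

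With {\tt mapt} in hand, the inequality $m(t(n)) \leq n+1$ becomes a quantifier-free check that Walnut can discharge directly:
\begin{verbatim}
eval mt_upperbnd "?msd_4 An,y $mapt(n,y) => y<=n+1":
\end{verbatim}
I expect this to return {\tt TRUE}, establishing the upper bound.

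For the equality condition, the philosophy outlined in Section~\ref{automata} suggests we let Walnut build the automaton for the set $\{n : m(t(n)) = n+1\}$ directly, and then read off the regular expression from it:
\begin{verbatim}
def mt_equality "?msd_4 $mapt(n,n+1)":
\end{verbatim}
The resulting DFA (over the base-$4$ alphabet for $n$) should, after inspection and minimization, accept precisely the language $(0 \uni 11^*0)^* 3^*$. The main obstacle, such as it is, lies in this last step: translating the Walnut-produced automaton into the claimed regular expression. This is a routine exercise in state elimination, but one must be careful that leading zeros are handled correctly and that the claimed expression and the minimized DFA describe the same language. To be safe I would also run an independent Walnut verification that the regular expression matches the equality set:
\begin{verbatim}
reg eqset msd_4 "(0|11*0)*3*":
eval check_equality "?msd_4 An $mt_equality(n) <=> $eqset(n)":
\end{verbatim}
If this returns {\tt TRUE}, we are done: both containments are established simultaneously, and together with the upper bound they give the lemma.
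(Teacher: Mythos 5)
Your proof is essentially the paper's proof: the same \texttt{mapt} synchronization, the same \texttt{Walnut} check of the bound $m(t(n)) \leq n+1$, and the same extraction of the equality set as an automaton whose language you then read off, with your final regex cross-check being a harmless (and sensible) extra. One small slip in the prose: \texttt{link42}$(y,x)$ accepts iff $y = m(x)$, not iff $m(y) = x$; your parenthetical ``equivalently $y = [(x)_2]_4$'' and everything that follows already have it right, so this is only a typo.
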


\begin{proof}
We use the following {\tt Walnut} code:
\begin{verbatim}
def mapt "?msd_4 Ex $rst(n,x) & $link42(y,x)":
eval bnd "?msd_4 An,z $mapt(n,z) => z<=n+1":
def except2 "?msd_4 Ez $mapt(n,z) & z=n+1":
\end{verbatim}
The command {\tt bnd} returns {\tt TRUE}, and the command
{\tt except2} computes a simple automaton of 3 states
accepting the regular expression $(0 \uni 11^*0)^*3^*$. 
\end{proof}

From the first claim of Lemma~\ref{thm17} we see that
$m(t(n)) \leq n+1$, and by Lemma~\ref{pseudosquare} we have $t(n)^2/3 \leq m(t(n))$.  Putting
these bounds together gives us
$t(n)\leq \sqrt{3(n+1)}$, which is
very close to the Brillhart-Morton upper bound for $t(n)$.

To get the other Brillhart-Morton upper bound
of Theorem~\ref{rsbounds}, we just use Eqs.~\eqref{eq6} and \eqref{eq7}, just as Brillhart and Morton did.  This gives us
$$ t(n) = s(n/2) - t(n/2 - 1) 
\leq s(n/2) \leq \sqrt{3n} $$
for $n \geq 2$ even and
$$ t(n) = s( (n-1)/2) - t( (n-1)/2 ) 
\leq s( (n-1)/2) \leq \sqrt{3(n-1)}$$
for $n$ odd.  Thus we have proved
\begin{theorem}
We have $t(n) \leq \sqrt{3n}$
for $n \geq 1$.
\end{theorem}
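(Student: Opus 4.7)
The plan is to reduce the desired inequality for $t$ to the already-established upper bound $s(n) \leq \sqrt{6n}$ by exploiting the halving recurrences \eqref{eq8} and \eqref{eq9}, which read $t(2m) = s(m) - t(m-1)$ for $m \geq 1$ and $t(2m+1) = s(m) - t(m)$ for $m \geq 0$. Combined with non-negativity of $t$ (established via the well-definedness check just after Theorem~\ref{thm1}), each recurrence immediately yields $t(n) \leq s(\lfloor n/2 \rfloor)$, at which point the rest is a calibration of constants.

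The remaining step is a short case split by parity. For even $n = 2m$ with $m \geq 1$ we obtain $t(n) \leq s(m) \leq \sqrt{6m} = \sqrt{3n}$. For odd $n = 2m+1$ with $m \geq 0$ we obtain $t(n) \leq s(m) \leq \sqrt{6m} = \sqrt{3(n-1)} \leq \sqrt{3n}$. The odd case absorbs the boundary value $n = 1$ (taking $m = 0$ gives $t(1) \leq s(0) = 1 \leq \sqrt{3}$), so no separate base case is required.

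There is no real obstacle here; the whole proof is a one-line reduction once the two ingredients are in hand. The genuine work was carried out earlier: the $\sqrt{6n}$ bound for $s$ required the elaborate three-class interval argument in the preceding subsection (with the $J_{k,i}$ partition and the $\{0,2\}^*$ exceptional set), while non-negativity of $t$ was the first non-trivial consequence extracted from the synchronized automaton {\tt rst}. Consequently, no new \texttt{Walnut} computation is needed for this theorem, and the argument mirrors exactly the strategy Brillhart and Morton themselves used to pass from bounds on $s$ to bounds on $t$.
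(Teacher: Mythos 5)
Your proof is correct and follows essentially the same route as the paper: use the halving recurrences $t(2m) = s(m) - t(m-1)$ and $t(2m+1) = s(m) - t(m)$ together with $t \geq 0$ to obtain $t(n) \leq s(\lfloor n/2 \rfloor)$, then apply $s(m) \leq \sqrt{6m}$ and split by parity. This is precisely the reduction the paper gives (the paper even notes it is the same passage Brillhart and Morton used), with your handling of the $n=1$ boundary being slightly more explicit.
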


We can also reprove, in an extremely simple
fashion, an inequality of Brillhart and Morton on
the $\omega$ function introduced previously.
Let us start by showing 
\begin{lemma}
For $k \geq 0$ we have $\omega(k) \leq {5\over 3}m(k)$.
\end{lemma}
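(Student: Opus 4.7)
The plan is to rewrite the inequality as $3\omega(k) \leq 5m(k)$, which is equivalent over the integers, and then have \texttt{Walnut} verify it directly.

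The synchronized automaton \texttt{omega} constructed earlier gives us $\omega(k) = n$ as a predicate with $k$ in base $2$ and $n$ in base $4$, and the automaton \texttt{link42} already built to relate powers of $2$ and powers of $4$ happens to be exactly a synchronized representation of $m$: the statement $\$\text{link42}(y,k)$ asserts that $y$ has, in base $4$, the same digit string as $k$ does in base $2$, which is precisely $y = m(k)$. Crucially, both $\omega(k)$ and $m(k)$ emerge in base $4$, so the linear inequality $3n \leq 5y$ can be tested directly using \texttt{Walnut}'s base-$4$ arithmetic. The command I would issue is

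\begin{verbatim}
eval omega_bnd "?msd_4 Ak,n,y ($omega(k,n) & $link42(y,k)) => 3*n<=5*y":
\end{verbatim}

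and I expect the answer to be \texttt{TRUE}.

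The main conceptual point is recognizing that although the quantity $m(k)$ looks awkward (it is defined via base conversion, not by any arithmetic formula available to \texttt{Walnut}), the \texttt{link42} automaton is \emph{already} a synchronized representation of $m$. Hence the inequality reduces to comparing two base-$4$ synchronized outputs of a common input $k$, entirely avoiding \texttt{Walnut}'s inability to square integers or to take square roots. I do not anticipate a substantive obstacle: the only thing to watch is that $k$ lives in base $2$ while $n$ and $y$ live in base $4$, but this is transparently handled by the synchronized framework already deployed throughout the paper. The edge case $k = 0$, where $\omega(0)$ may be undefined because $s(n) \geq 1$ for all $n$, does not cause trouble since the universal implication is then vacuously satisfied.
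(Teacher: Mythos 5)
Your proposal is correct and is essentially identical to the paper's proof: the paper issues the command \verb|eval omegabound "?msd_4 Ak,x,y ($omega(k,x) & $link42(y,k)) => 3*x<=5*y":|, which matches yours up to the name of a bound variable. Your observation that \texttt{link42} with its arguments in this order serves as a synchronized representation of $m$ is exactly the idea the paper uses.
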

\begin{proof}
We verify this with the following {\tt Walnut} command.
\begin{verbatim}
eval omegabound "?msd_4 Ak,x,y ($omega(k,x) & $link42(y,k)) => 3*x<=5*y":
\end{verbatim}
and it returns {\tt TRUE}.
\end{proof}

This, combined with Lemma~\ref{pseudosquare}  gives a proof of Theorem 5 in
Brillhart and Morton \cite{Brillhart&Morton:1996}:
\begin{corollary}
We have $\omega(k) \leq {5 \over 3} k^2$ for $k \geq 0$.
\end{corollary}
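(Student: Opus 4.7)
The plan is to chain together the two bounds that have just been established. The preceding lemma gives $\omega(k) \leq \frac{5}{3} m(k)$, which was proved in a single \texttt{Walnut} call using the synchronized automaton for $\omega$ and the \texttt{link42} automaton encoding the pseudo-square function $m$. Meanwhile, Lemma~\ref{pseudosquare} provides the elementary upper bound $m(n) \leq n^2$ valid for all $n \geq 0$.

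The proof will therefore consist of a one-line composition: apply the preceding lemma to get $\omega(k) \leq \frac{5}{3} m(k)$, then substitute the upper bound $m(k) \leq k^2$ from Lemma~\ref{pseudosquare} to conclude $\omega(k) \leq \frac{5}{3} k^2$. No induction, case split, or further automaton construction is needed, since the hard work has been absorbed into the \texttt{Walnut} verification of the previous lemma.

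There is no real obstacle here; the only subtlety worth remarking on is why we cannot prove the $k^2$ bound directly inside \texttt{Walnut}. The reason is exactly the one already explained in the discussion around Theorem~\ref{rsbounds}: \texttt{Walnut} cannot compute squares of arbitrary variables, which is precisely the motivation for introducing the pseudo-square $m$ and for having Lemma~\ref{pseudosquare} available as a bridge between automaton-verifiable inequalities and ordinary polynomial inequalities. This corollary is a clean illustration of that two-step pattern: prove an automatic inequality involving $m$, then invoke Lemma~\ref{pseudosquare} to translate it into a statement about $k^2$.
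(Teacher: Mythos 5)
Your proposal is correct and matches the paper's argument exactly: both chain the preceding lemma's bound $\omega(k) \leq \frac{5}{3}m(k)$ with the upper bound $m(k) \leq k^2$ from Lemma~\ref{pseudosquare}. The paper states this in a single sentence; your explanation of why the pseudo-square detour is needed is accurate and consistent with the paper's earlier discussion.
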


\section{Counting the $k$ for which $s(k) = n$}
\label{counting}

One of the most fun properties of the Rudin-Shapiro summation function $s(n)$ is Satz 22 of \cite{Brillhart&Morton:1978}:
\begin{theorem}
There are exactly $n$ values of $k$ for which $s(k)=n$.
\label{thm22}
\end{theorem}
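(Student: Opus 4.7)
The plan is to reduce the theorem to a mechanical verification in the spirit of the rest of the paper. Because Theorem~\ref{thm1} shows that $s$ is $(4,2)$-synchronized, the binary relation ``$s(k)=n$'' is recognized by a DFA reading $(k)_4$ and $(n)_2$ in parallel. Therefore, for each fixed $n$, the set $\{(k)_4 : s(k)=n\}$ is a regular language whose canonical (leading-zero-free) members are in bijection with the $k$ we want to count. By the standard correspondence between synchronized sequences and regular sequences, the counting function
\[
c(n) \;:=\; |\{k \geq 0 : s(k)=n\}|
\]
is therefore a $2$-regular sequence, and Walnut can compute an explicit linear representation $(u,\mu,v)$ for $c$ with $c(n) = u^{T}\mu((n)_2)\,v$ directly from the automaton {\tt rss}.

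My first step would be to invoke Walnut's enumeration command on the formula ``$\mathtt{\$rss}(k,n)$'' with $k$ as the counted variable, yielding such a linear representation for $c$. Second, I would exhibit the trivial linear representation of the identity map $n \mapsto n$ (also a $2$-regular sequence). Third, I would use Walnut's built-in equivalence test for linear representations: two such representations compute the same sequence iff the joined representation for the difference minimizes to the zero function, which is decidable. A single {\tt TRUE} from this equality check completes the proof for all $n \geq 0$ at once (noting that the case $n=0$ correctly reduces to $c(0)=0$, since $s(k) \geq 1$ for all $k$).

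The main obstacle will be bookkeeping rather than mathematics: the synchronized automaton permits arbitrarily many leading zeros on the $(k)_4$ side, so a naive count would be infinite. This must be addressed either by restricting the enumeration to canonical base-$4$ strings (first symbol nonzero, except for the string representing $0$) or by quotienting the linear representation by the leading-zero loop. An alternative, more ``hands-on'' backup plan, should the enumeration route prove unwieldy, is to define the candidate preimage counts recursively using the four-branch recurrences of Lemma~\ref{lemma2}: if $s(k)=m$ then the four children $4k, 4k+1, 4k+2, 4k+3$ contribute to $s$-values in $\{2m-1, 2m, 2m+1\}$ in a pattern dictated by $a(k)$ and the parity of $k$. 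Translating this into a Walnut statement of the form ``$c(2m) = c(m)+c(m-1)$, $c(2m+1) = c(m)+c(m+1)$'' (after verifying the correct base cases) and checking by induction that the unique solution is $c(n)=n$ would yield the same conclusion without leaving the synchronized-automaton world.
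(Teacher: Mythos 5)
Your proposal matches the paper's proof essentially step-for-step: both use Walnut's enumeration on the synchronized automaton {\tt rss} to produce a base-$2$ linear representation for the counting function $c(n)=|\{k : s(k)=n\}|$, exhibit a linear representation for the identity map $n\mapsto n$, and conclude by showing the difference representation minimizes to the zero function. Two minor caveats: the minimization is performed by an external implementation of Sch\"utzenberger's algorithm rather than a built-in Walnut equivalence test, and Walnut's enumeration command already normalizes away the leading-zero issue you flagged, so no extra bookkeeping is needed.
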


\begin{proof}
We can prove this theorem ``purely mechanically'' by using another capability of {\tt Walnut}:  the fact that it can create base-$b$ linear representations for values of synchronized sequences.   By a {\it base-$b$ linear representation\/} for a function $f(n)$ we mean vectors $v, w$, and a matrix-valued morphism $\gamma$ such that
$f(n) = v \gamma(x) w$ for all strings $x$ representing $n$ in base $b$.
The {\it rank\/} of a linear representation is the dimension of $v$.

So let us find a base-$2$ linear representation for the number of such $k$ for which $s(k) = n$:
\begin{verbatim}
eval satz22 n "$rss(?msd_4 k,n)":
\end{verbatim}
This gives us a base-$2$ linear representation of rank $7$ computing some function $f(n)$. 
Next we use {\tt Walnut} to compute a base-$2$ linear representation for the function $g(n) = n$:
\begin{verbatim}
eval gfunc n "i<n":
\end{verbatim}
From this, we can easily compute a base-$2$ linear representation for $f(n)-g(n)$, and minimize it using an algorithm\footnote{{\tt Maple} code implementing this algorithm is available from the second author.} of Sch\"utzenberger 
\cite[\S 2.3]{Berstel&Reutenauer:2011}.
When we do so, we get the representation for the $0$ function, so $f(n) = n$.
\end{proof}

\section{New results}
\label{new}

One big advantage to the synchronized
representation of the Rudin-Shapiro sum functions is that it becomes almost trivial to explore and rigorously prove new properties.
As a new result, let's consider the analogue of Theorem~\ref{thm22}, but for the function $t$.  Here we run into the problem that every natural number $k$ appears as a value of $t(n)$ infinitely often:
\begin{verbatim}
eval tvalues "?msd_4 An,k Em (m>n) & $rst(m,k)":
\end{verbatim}
and {\tt Walnut} returns
{\tt TRUE}.

So it makes sense to count the number of times $k$ appears as a value of $t(n)$ in some initial segment, say the first $0,1,\ldots, 2^r-1$.

Some empirical calculations suggest the following conjecture:
\begin{theorem}
\leavevmode
\begin{itemize}
\item[(a)] For $n \in [0,4^m/2)$, $0$ appears as a value of $t(n)$ exactly $2^{m-1}$ times, and $k$ appears exactly $2^m - k$ times for $1 \leq k < 2^m$.

\item[(b)] For $n \in [0,4^m)$, $0$ appears as a value of $t(n)$ exactly $2^m - 1$ times, $2^m$ appears exactly once, and $k$
appears $2(2^m -k)$ times
for $1 \leq k < 2^m$.
\end{itemize}
\end{theorem}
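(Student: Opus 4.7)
The plan is to adapt the linear-representation technique used in the proof of Theorem~\ref{thm22}. For each part, I will construct two linear representations---one for the actual count, one for the claimed closed form---and verify their agreement by computing the difference and minimizing via Sch\"utzenberger's algorithm.

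First, for each part, I would produce a base-$(4,2)$ linear representation of the counting function
\[
C(N,k) := |\{n : 0 \le n < N,\ t(n) = k\}|,
\]
with $N$ restricted to the appropriate shape. For part~(b), $N$ is a power of $4$, captured by the already-defined regular expression \texttt{power4}; the Walnut command looks like
\begin{verbatim}
eval countb N,k "?msd_4 $power4(N) & n<N & $rst(n,?msd_2 k)":
\end{verbatim}
(with $n$ the counted variable and $N,k$ the free inputs, exactly as in the Satz~22 argument). For part~(a) one instead constrains $N$ to the regular language $20^*$ in base $4$, equivalently demanding that $2N$ is a power of $4$.

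Next, I would construct a linear representation $F(N,k)$ for the claimed closed form. The automaton \texttt{link42} synchronizes $N$ with $M := \sqrt{N}$, so that $N = 4^m$ iff $M = 2^m$. The piecewise claim for part~(b) becomes
\[
F_b(N,k) \;=\; \begin{cases} M-1, & k = 0, \\ 1, & k = M, \\ 2(M-k), & 1 \le k < M, \end{cases}
\]
and analogously for part~(a). Each of the three pieces is a $(4,2)$-synchronized relation, and since their supports are disjoint on the admissible inputs, summing the three corresponding linear representations gives one for $F_b$ (similarly $F_a$). The final step is then to form the difference $C - F$ and minimize using Sch\"utzenberger's algorithm, checking that the minimized representation is the zero representation; this establishes $C \equiv F$ on all admissible inputs and proves both parts.

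The main obstacle I anticipate is Step~2: assembling $F$ cleanly given its piecewise nature and the need to simultaneously handle base-$4$ (for $N$ and $M$, linked by \texttt{link42}) and base-$2$ (for $k$) encodings, especially so that the cases $k=0$ and $k=M$ do not accidentally overlap the middle case. Care is also needed at the smallest values of $m$ to avoid spurious mismatches---for instance, requiring $N \ge 4$ in (b) and $N \ge 2$ in (a) so that the formulas make arithmetic sense. Once $F$ is correctly in place, the verification reduces to a mechanical rank computation entirely analogous to the one in the proof of Theorem~\ref{thm22}.
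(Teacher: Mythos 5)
Your proposal takes essentially the same approach as the paper: build two linear representations (one counting $n<N$ with $t(n)=k$, one realizing the claimed closed form), form their difference, and minimize via Sch\"utzenberger to check it is zero. The paper handles the one obstacle you flag---assembling a linear representation for the piecewise $F$---by encoding each case's claimed value as a \emph{count} of an auxiliary variable $n$ satisfying a simple linear inequality (e.g., in part (b) the middle case $2(2^m-k)$ is realized as the number of $n$ with $n+2k<2y$ where $y=2^m$), so that a single \texttt{def} with $n$ as the counted variable and a three-way disjunction on $k$ produces the needed representation directly, with no separate summation step.
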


\begin{proof}
We use the following {\tt Walnut} commands.
\begin{verbatim}
def counta1 k x "?msd_4 $rst(n,k) & $power4(x) & x>1 & 2*n<x ":

def counta2 k x "?msd_4 Ey $power4(x) & x>1 & $link42(x,y) & 
   (?msd_2 (k=0 & 2*n<y)|(1<=k & k<y & n+k<y))":

def countb1 k x "?msd_4 $rst(n,k) & $power4(x) & n<x":

def countb2 k x "?msd_4 Ey $power4(x) & $link42(x,y) & 
   (?msd_2 (k=0 & n+1<y)|(k=y & n=0)|(1<=k & k<y & n+2*k<2*y))":
\end{verbatim}

The first two statements are used for part (a).
The code {\tt counta1} asserts that $x = 4^m$ for some $m$,  and that $t(n) =k$ for some $n<x$.  It returns a linear representation for the number of $n$ for which this holds, as a function of $k$ and $x$.  The
code {\tt counta2} creates a formula that says that the number of $n$ fulfills the conclusion of the theorem.   From these linear representations we can create a linear representation for their difference.  When we minimize it, we get the linear representation for the $0$ function, so they compute the same function.

The same approach is used
for (b).
\end{proof}

\subsection{The $\alpha$ function}

We now introduce an analogue of Brillhart and Morton's $\omega$ function,
but for the {\it first\/} occurrence of each distinct value of $s(n)$; i.e., we define
$\alpha(k)$ to be the smallest value of
$n$ for which $s(n) = k$.   We can create a $(2,4)$-synchronized 
automaton for
$\alpha$ as follows:
\begin{verbatim}
def alpha "?msd_4 $rss(n,k) & At (t<n) => ~$rss(t,k)":
\end{verbatim}
This automaton is given in Figure~\ref{alpha}.
\begin{figure}[H]
\begin{center}
\includegraphics[width=5.5in]{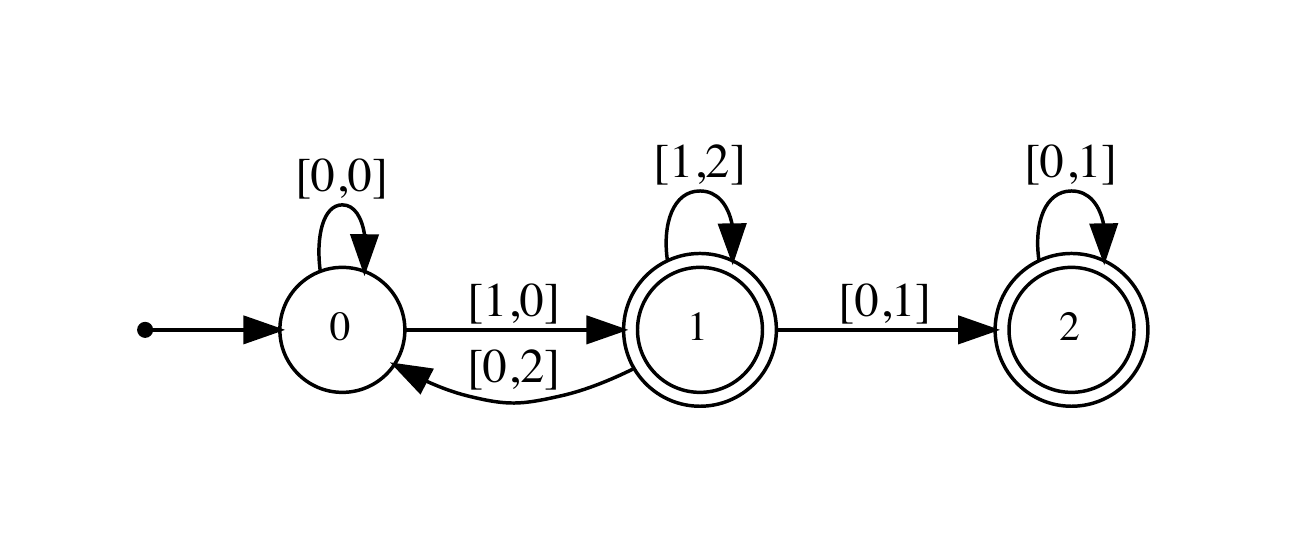}
\end{center}
\caption{$(2,4)$-synchronized automaton for $\alpha$.}
\label{alpha}
\end{figure}

\begin{theorem}
Let $k\geq 1$ and write $k=k'2^\ell$ where $k'$ is odd.  Then
$$
\alpha(k) =
m(k)/2 - (2\cdot 4^{\ell-1}+1)/3.
$$
\end{theorem}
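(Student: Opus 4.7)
The plan is to verify the identity in {\tt Walnut}, after first clearing denominators so that {\tt Walnut}'s restriction to arithmetic over $\Enn$ is not an issue. Multiplying through by $6$ and using $2(2\cdot 4^{\ell-1}+1)=4^\ell+2$, the claim becomes the clean integer identity
\[
3\,m(k)\;=\;6\,\alpha(k)+4^\ell+2,\qquad k\ge 1,
\]
with $\ell=v_2(k)$ the $2$-adic valuation of $k$. This form is well-defined for all $\ell\ge 0$ (including $\ell=0$, the only case where the original formula contains a non-integer summand); correctness of the cleared form implies the original, since a direct check shows $m(k)$ and $k$ have the same parity, so $m(k)/2-1/2$ is an integer whenever $\ell=0$.

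For the $\alpha$ side I would reuse the $(2,4)$-synchronized automaton displayed in Figure~\ref{alpha}. For $m(k)$ I would use the {\tt link42} automaton with its arguments reversed, exactly as done in Section~\ref{inequalities}. The missing ingredient is access to $p:=4^\ell=4^{v_2(k)}$. My plan is to route through $q:=2^\ell$: in base $2$, $q$ has the form $1\,0^\ell$, and the requirement that $\ell=v_2(k)$ forces the single $1$-bit of $q$ to sit exactly at the lowest $1$-bit of $k$. Reading $k$ and $q$ from MSB to LSB with $q$ padded to align lengths, this relation is captured precisely by the synchronized regular expression
\[
([0,0]\mid[1,0])^*\,[1,1]\,[0,0]^*,
\]
which I would declare as a $2$-component {\tt Walnut} automaton, say {\tt lowpow2}. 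A single application of {\tt link42}$(p,q)$ then converts $q$ in base $2$ into $p$ in base $4$, because $(4^\ell)_4=(2^\ell)_2=1\,0^\ell$.

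With these pieces assembled, the verification reduces to a single {\tt Walnut} assertion: for every $k\ge 1$ and for the necessarily unique witnesses $q,p,n,z$ with {\tt lowpow2}$(k,q)$, {\tt link42}$(p,q)$, $\alpha(k,n)$, and {\tt link42}$(z,k)$ (so $z=m(k)$), we have $3z=6n+p+2$; if {\tt Walnut} returns {\tt TRUE}, the theorem follows. The main obstacle is exactly this construction of the helper automaton extracting $4^{v_2(k)}$ from $k$: the identity couples a quantity natural in base $2$ (the valuation $\ell$) with a quantity natural in base $4$ ($4^\ell$), and {\tt Walnut} has no direct primitive for this. The detour $k\mapsto q=2^\ell\mapsto p=4^\ell$ via the regex above together with {\tt link42} is the essential bridge; after that, the remaining mixed-base bookkeeping (base $2$ for $k,q$; base $4$ for $p,n,z$) in the final formula follows the pattern used repeatedly in earlier sections.
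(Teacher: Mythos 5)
Your proposal is correct, and it takes a genuinely different route from the paper's. The paper does \emph{not} verify this identity with {\tt Walnut}: it inspects the $(2,4)$-synchronized automaton in Figure~\ref{alpha} directly, observing that an accepted pair $(k,n)$ has $(k)_2$ ending in $\ell$ zeros while $(n)_4$ ends in $\ell$ ones, and then adding a carefully chosen constant $r=[2^{\ell-2}3]_4$ so that $(4(n+r))_4$ becomes the ``doubled'' version of $(k)_2$, giving $4(n+r)=2m(k)$ and hence the formula (with the small $\ell$ cases handled separately). By contrast, you clear denominators to the integer identity $3\,m(k)=6\,\alpha(k)+4^\ell+2$ and aim for a single {\tt Walnut} check, the crucial new ingredient being the $(2,2)$-synchronized helper automaton {\tt lowpow2} accepting $([0,0]\mid[1,0])^*[1,1][0,0]^*$, which extracts $q=2^{v_2(k)}$ from $k$, followed by {\tt link42} to lift $q$ to $p=4^{v_2(k)}$ in base $4$. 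Your regex and base bookkeeping are right (in particular $m(k)\equiv k\pmod 2$, so the $\ell=0$ case of the original fractional formula is indeed integral), and the needed uniqueness of witnesses holds because each of {\tt lowpow2}, {\tt link42}, {\tt alpha} is functional. The trade-off is that the paper's argument is short and human-readable but somewhat informal, relying on reading the picture of the automaton; your route costs one extra helper automaton but yields a fully mechanical verification, which is actually more in keeping with the rest of the paper's methodology (and matches how the paper itself handles the analogous statement for $\alpha'$).
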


\begin{proof}
Let $(k,n)$ be a pair accepted by the automaton in Figure~\ref{alpha}.
Suppose $\ell \geq 2$.  Note that $(k)_2$
ends with $\ell$ $0$'s and $(n)_4$ ends with $\ell$ $1$'s.  Let
$r=[2^{\ell-2}3]_4$.  We observe that $(4(n+r))_4$ consists
only of $0$'s and $2$'s and has $2$'s exactly where $(k)_2$ has
$1$'s.  It follows that $4(n+r)=2m(k)$; i.e.,
$$
4\left(n + 2\sum_{i=0}^{\ell-2}4^i + 1\right)=2m(k),
$$
which gives
$\alpha(k) = n = m(k)/2 - (2\cdot 4^{\ell-1}+1)/3$, as required.  The cases $\ell=0,1$
are similar.
\end{proof}

Similarly, we can consider the analogue of the $\alpha$ function for $t$ instead of $s$; let us call it $\alpha'$.  Remarkably, $\alpha'$ has a very simple expression in terms of known functions:
\begin{theorem}
Define $\alpha'(k) = \min \{n \suchthat t(n) = k\}$.  Then
$\alpha'(k) = m(k) -1$ for all $k \geq 1$.
\end{theorem}

\begin{proof}
We use the following {\tt Walnut} code:
\begin{verbatim}
def alphap "?msd_4 $rst(n,k) & At (t<n) => ~$rst(t,k)":
eval verify_alphap "?msd_4 Ak,t ((?msd_2 k>=1) & $alphap(k,t)) => 
   $link42(t+1,k)":
\end{verbatim}
and {\tt Walnut} returns {\tt TRUE}.
\end{proof}

\section{Plane-filling curves}
\label{curves}

In this section we show how to use our automata to prove results about the space-filling curve generated by connecting the lattice points $P_n$ in the plane
defined by $P_n = (x(n), y(n))$ for $n \geq 0$.
This curve was previously explored in the papers \cite{MendesFrance:1982,MendesFrance&Tenenbaum:1981,Dekking&MendesFrance&vanderPoorten:1982}.
The first $1024$ points of this curve are illustrated in Figure~\ref{curve}, where rounded
edges are used to make the curve clear.
\begin{figure}[H]
\begin{center}
\includegraphics[width=5in]{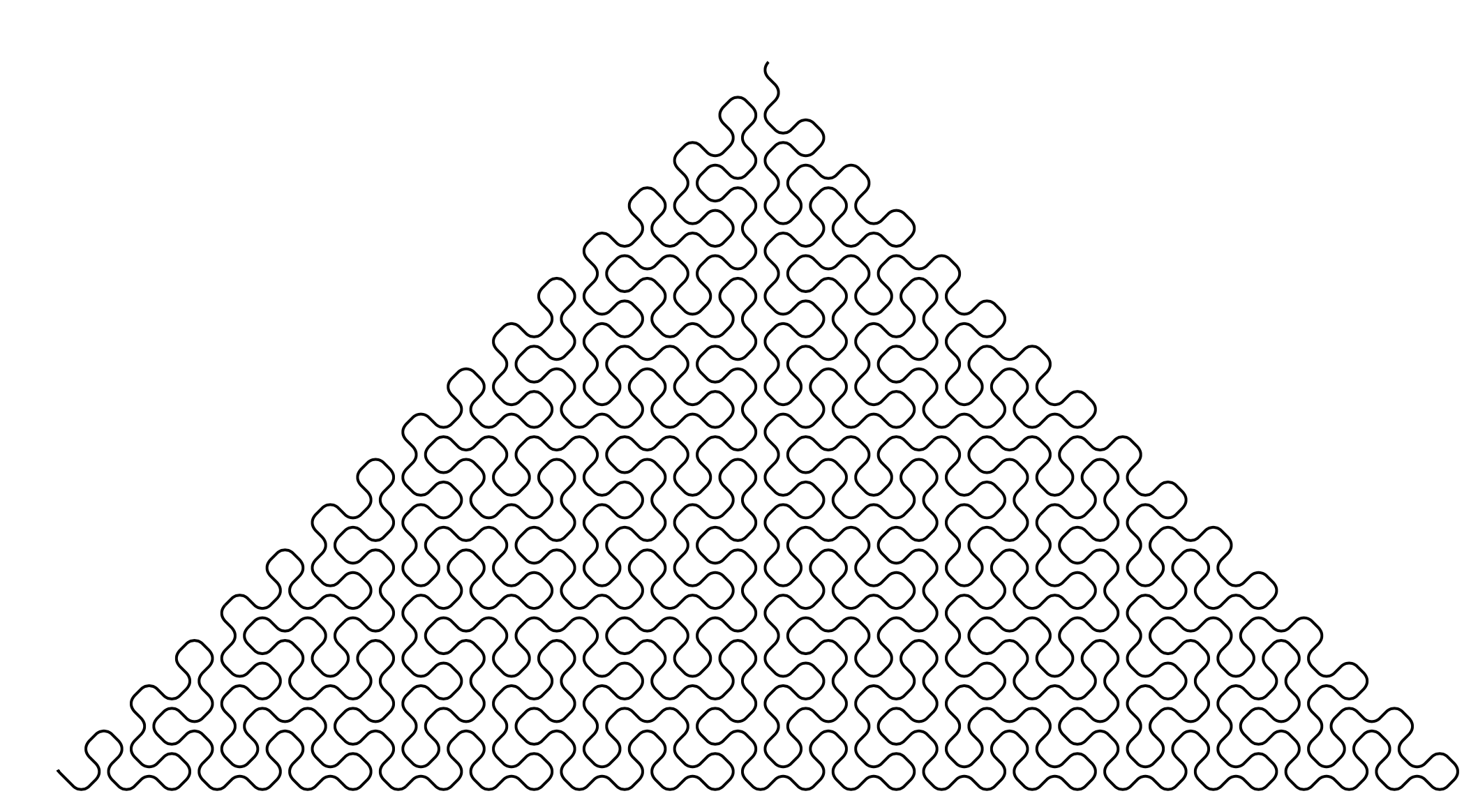}
\end{center}
\caption{The curve fills one-eighth of the plane.}
\label{curve}
\end{figure}

First let us determine exactly which lattice points are hit.   
\begin{theorem}
We have $(x,y) = (s(n),t(n))$ for $(x,y) \in \Enn\times\Enn$ if and only if
$x \geq y$ and $(x,y) \not= (0,0)$ and $x \equiv \modd{y} {2}$.  Furthermore, for each such pair $(x,y)$, there are at most two such $n$.
\end{theorem}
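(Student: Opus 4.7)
The plan is to reduce both assertions to first-order statements that \texttt{Walnut} can verify using the synchronized automata $\texttt{rss}$ and $\texttt{rst}$ already constructed. First, I would build a synchronized automaton recognizing the ``hit set'' $H = \{(x,y) : \exists n,\ s(n) = x \wedge t(n) = y\}$ via the definition
\begin{verbatim}
def hit "?msd_4 En $rss(n,x) & $rst(n,y)":
\end{verbatim}
so that $x,y$ end up in base $2$, matching the output base of $\texttt{rss}$ and $\texttt{rst}$. The three conditions on the right-hand side of the characterization are all directly expressible: $x \geq y$ is native comparison, $(x,y) \neq (0,0)$ is a simple disequality, and $x \equiv y \pmod{2}$ is equivalent to the existence of $k$ with $x + y = 2k$.

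Next, I would verify both directions of the ``if and only if'' separately via
\begin{verbatim}
eval forward "?msd_2 Ax,y $hit(x,y) =>
   (x>=y & ~(x=0 & y=0) & Ek x+y=2*k)":
eval backward "?msd_2 Ax,y (x>=y & ~(x=0 & y=0) & Ek x+y=2*k)
   => $hit(x,y)":
\end{verbatim}
expecting both to return \texttt{TRUE}. As a fallback, I could alternatively just compute and display the automaton for $\$hit$ and compare it with the (essentially trivial) automaton for the claimed predicate on $(x,y)$, which is likely how the result was discovered in the first place.

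For the ``at most two'' claim, the assertion is that no three strictly increasing $n$ share the same image, which I would encode as
\begin{verbatim}
eval at_most_two "?msd_4 ~Ex,y,n1,n2,n3 (n1<n2 & n2<n3 &
   $rss(n1,x) & $rst(n1,y) & $rss(n2,x) & $rst(n2,y) &
   $rss(n3,x) & $rst(n3,y))":
\end{verbatim}
and again expect \texttt{TRUE}. The main obstacle I anticipate is potential state-space blow-up for this last command: the three simultaneous existentials over $n_1, n_2, n_3$, each paired with the same $(x,y)$, can produce a large intermediate automaton before minimization. If this turns out to be impractical, I would first define an auxiliary $(2,2)$-synchronized automaton for the set of pairs $(x,y)$ with at least two preimages, and then on that set verify that no third preimage exists strictly above the second, thereby breaking the triple quantifier into two nested ones. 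Beyond this efficiency concern the argument is a straightforward mechanical check given our synchronized representations.
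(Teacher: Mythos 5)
Your proposal is correct and follows essentially the same route as the paper: define a synchronized relation pairing $(x,y)$ with $(s(n),t(n))$, check the biconditional characterization via a {\tt Walnut} query (the paper writes $x+y>0$ and tests $x-y$ even rather than splitting the iff, but this is cosmetic), and rule out a third preimage by asserting the existence of three distinct $n$ hitting the same pair and observing {\tt FALSE}. The paper's version retains $n$ as an explicit argument of the {\tt curve} automaton rather than projecting it away, but the logical content and the computation are the same, and your worry about blow-up from the triple quantifier does not materialize in practice for these small automata.
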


\begin{proof}
We use the following {\tt Walnut} code:
\begin{verbatim}
def even2 "Ek n=2*k":
def curve "?msd_4 $rss(n,x) & $rst(n,y)":
eval curvecheck "?msd_4 Ax,y (?msd_2 x>=y & x+y>0 & $even2(?msd_2 x-y)) <=> 
   En $curve(n,x,y)":
eval curvecheck3 "?msd_4 Ex,y,n1,n2,n3 n1<n2 & n2<n3 &
   $curve(n1,x,y) & $curve(n2,x,y) & $curve(n3,x,y)":
\end{verbatim}
The first check returns {\tt TRUE} and the
second, asserting a point that is hit three times, returns {\tt FALSE}.
\end{proof}

\begin{theorem}
The curve defined by $(P_n)_{n \geq 0}$ is not self-intersecting. 
\end{theorem}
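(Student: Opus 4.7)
The plan is to reduce non-self-intersection to a single first-order statement about $s$ and $t$ that can be checked by {\tt Walnut} on top of the synchronized automata {\tt rss} and {\tt rst}.

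The key geometric observation is that every edge $E_n := \overline{P_n P_{n+1}}$ of the curve is a unit diagonal of a lattice square. Indeed, $s(n+1) - s(n) = a(n+1) \in \{+1,-1\}$ and $t(n+1) - t(n) = (-1)^{n+1} a(n+1) \in \{+1,-1\}$, so both coordinates change by exactly one in absolute value at each step. Two such unit diagonals in the integer lattice can share an interior point only when they are the two crossing diagonals of the same unit square, and they can coincide as sets only when they are the same diagonal; in either of these self-intersecting situations the two edges have the same midpoint, and conversely distinct midpoints force the two diagonals to live in different unit squares and thus meet, if at all, only at a lattice endpoint (which is the only permitted kind of meeting, corresponding to a shared vertex of the curve). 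So non-self-intersection is equivalent to the injectivity, over $n \geq 0$, of the pair $\bigl(s(n)+s(n+1),\ t(n)+t(n+1)\bigr)$ (written doubled so as to stay in $\mathbb{N}$).

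I would therefore write a {\tt Walnut} formula asserting that for all $n_1 \neq n_2$, either the $s$-coordinate sums differ or the $t$-coordinate sums differ, quantifying existentially over the eight auxiliary values $s(n_i), s(n_i+1), t(n_i), t(n_i+1)$ for $i=1,2$, each introduced via {\tt \$rss} or {\tt \$rst}, with the $n_i$-arithmetic carried out in base $4$ and the value-arithmetic in base $2$. The correctness reduction above is elementary; the actual verification is a one-shot call to {\tt Walnut}.

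The main obstacle I anticipate is purely computational: eight existentially bound value-variables could cause the intermediate products of automata to balloon. If that happens, I would first build a $(4,2,2)$-synchronized relation $\mu(n,x,y)$ meaning $x = s(n)+s(n+1) \land y = t(n)+t(n+1)$, obtained by combining {\tt rss} and {\tt rst} with the base-$4$ DFAO for $a(n+1)$ and the parity of $n+1$ (exactly as in the proofs of Lemmas~\ref{lemma2} and \ref{satz4}), verify that $\mu$ is a well-defined function of $n$ (cf.\ the well-definedness check in Remark following Theorem~\ref{thm1}), and then assert $\forall n_1 \forall n_2\, \forall x\, \forall y\, \bigl(\mu(n_1,x,y) \land \mu(n_2,x,y) \Rightarrow n_1 = n_2\bigr)$. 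This factoring keeps every intermediate automaton small and matches the paper's general strategy of isolating a synchronized relation before asking quantified questions about it.
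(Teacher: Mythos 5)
Your reduction is sound and genuinely different from the paper's. The paper splits the task into two parts: it notes that the parity condition $s(n)\equiv t(n)\pmod 2$ (from the preceding theorem) rules out two edges crossing interior-to-interior — the two diagonals of the same unit square cannot both have endpoints on the even checkerboard — and then runs two {\tt Walnut} queries ({\tt selfint1}, {\tt selfint2}) that ask whether any segment $\overline{P_mP_{m+1}}$ is ever repeated, in either orientation. You instead observe the general Euclidean fact that two unit diagonals of the integer lattice meet at a non-lattice point if and only if they share a midpoint, which folds both cases (coincidence and interior crossing) into a single first-order assertion — injectivity of $n\mapsto(s(n)+s(n+1),\,t(n)+t(n+1))$ — and one {\tt Walnut} query. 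What the paper's route buys is smaller intermediate automata (only four value-variables instead of eight, and it reuses the already-established parity fact); what your route buys is a cleaner and more uniform geometric lemma that doesn't lean on the previous theorem at all, and your fallback of first synchronizing a relation $\mu(n,x,y)$ is exactly the kind of staging the paper uses elsewhere. Both proofs share the same unstated convention that two edges meeting only at a lattice endpoint do not count as a self-intersection (the rounded-corner rendering justifies this, and the earlier theorem guarantees at most two visits to any lattice point); you flag this a bit more explicitly than the paper does, which is to your credit, though neither proof carefully argues that the two passes through a doubly-visited vertex can always be resolved without crossing.
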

\begin{proof}
    Because of the parity condition on
$x,y$ in the pairs visited it suffices to show that we never traverse the same segment
$(P_n, P_{n+1})$ twice for different $n$, either in the same direction, or the reverse direction.
We do this as follows:  we assert the existence of these traversals.
\begin{verbatim}
eval selfint1 "?msd_4 Em,x1,y1,x2,y2,n $curve(m,x1,y1) & 
   $curve(m+1,x2,y2) & $curve(n,x1,y1) & $curve(n+1,x2,y2) & m!=n":

eval selfint2 "?msd_4 Em,x1,y1,x2,y2,n $curve(m,x1,y1) & 
   $curve(m+1,x2,y2) & $curve(n+1,x1,y1) & $curve(n,x2,y2) & m!=n":
\end{verbatim}
And {\tt Walnut} returns {\tt FALSE} for both.
\end{proof}

\section{Going further}
\label{further}

Since, as mentioned in the
introduction, $a(n)$ is $+1$ or $-1$,
according to whether the number
of $11$'s occurring in $(n)_2$ are
even or odd, this suggests considering the analogous function
$a'(n)$, where we instead count the number
of $00$'s occurring in $(n)_2$.
Then it is easy to see that $a'(n)$
obeys the recursion
\begin{align*}
a'(2n) &= (-1)^{n+1} a'(n) && \quad 
(n \geq 1); \\
a'(2n+1) &= a'(n) && \quad (n \geq 0),
\end{align*}
with initial condition $a'(0) = 1$.
Then, in analogy with $s(n)$ and $t(n)$, one can consider the sums
\begin{align*}
s'(n) &= \sum_{0 \leq i \leq n} a'(n) \\
t'(n) &= \sum_{0 \leq i \leq n}
(-1)^n a'(n).
\end{align*}
The first few values of these sequences
are given in Table~\ref{tab2}.
\begin{table}[H]
\begin{center}
\begin{tabular}{c|ccccccccccccccccc}
$n$ & 0& 1& 2& 3& 4& 5& 6& 7& 8& 9&10&11&12&13&14&15\\
\hline
$a'(n)$ & 1& 1& 1& 1&$-1$& 1& 1& 1& 1&$-1$& 1& 1&$-1$& 1& 1& 1\\
$s'(n)$ & 1& 2& 3& 4& 3& 4& 5& 6& 7& 6& 7& 8& 7& 8& 9&10\\
$t'(n)$ & 1& 0& 1& 0&$-1$&$-2$&$-1$&$-2$&$-1$& 0& 1& 0&$-1$&$-2$&$-1$&$-2$ \\
\end{tabular}
\end{center}
\caption{First few values of $a'(n)$, $s'(n)$, and $t'(n)$.}
\label{tab2}
\end{table}

It turns out that both $s'(n)$
and $t'(n)$ are synchronized functions, which makes it possible to carry out the same kinds of analysis that we did
for the Rudin-Shapiro sequence.
However, since $t'(n)$ takes negative values, it's easier to work with
$1-t'(n)$ instead.
Then it is possible to prove that
both $s'(n)$ and $1-t'(n)$ are $(4,2)$-synchronized.

We just mention a few results without proof, leaving proofs and
further explorations to the reader.

\begin{theorem}
\leavevmode
\begin{itemize}
\item[(a)] We have
\begin{align*}
s'(2n) &= s'(n-1) -t'(n) + 2, && \quad (n\geq 1) \\
s'(2n+1) &= s'(n) - t'(n) + 2, &&
\quad (n \geq 0) \\
t'(2n) &= -t'(n) -s'(n-1) + 2, &&
\quad (n \geq 0) \\
t'(2n+1) &= -t'(n) -s'(n) + 2 && \quad (n \geq 0) .
\end{align*}
\item[(b)] We have
\begin{align*}
s'(4n) &= 2s'(n) - (2-(-1)^n)r'_n + 2, && \quad (n \geq 1) \\
s'(4n+1) &= 2s'(n) - 2r'_n + 2, && \quad (n \geq 0) \\
s'(4n+2) &= 2s'(n) - r'_n + 2, &&
\quad (n \geq 0) \\
s'(4n+3) &= 2s'(n) + 2, &&
\quad (n \geq 0).
\end{align*}
\item[(c)] For $k \geq 1$, the
minimum value of $s'(n)$ for
$n \in [4^k, 4^{k+1})$ is $2^{k+1} -1$ and $s'(n)$ attains this value
only when $n = (4^{k+1} -4)/3$.

For $k \geq 0$, the maximum value
of $s'(n)$ for 
$n \in [4^k, 4^{k+1})$ is
$3 \cdot 4^{k-1} -2$ and $s'(n)$
attains this value only when
$n = 4^{k+1}-1$.

\item[(d)] For $n \geq 1$ we have $3\sqrt{n}/2 \leq s'(n) \leq \sqrt{75n/7}$.

\item[(e)] $\liminf s'(n)/\sqrt{n} = \sqrt{3}$.

\item[(f)] For $n \geq 0$ we have $
-\sqrt{24n/7} \leq t'(n) \leq 0 $.
\end{itemize}
\end{theorem}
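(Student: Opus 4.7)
The plan is to carry over the techniques of Sections~\ref{automata}--\ref{inequalities} to the primed setting. We begin by constructing $(4,2)$-synchronized automata for $s'(n)$ and $1-t'(n)$ (working with $1-t'$ rather than $t'$ to keep values in $\mathbb{N}$, as the text itself suggests) by guessing them from initial values via Myhill--Nerode and then verifying them by the same short Walnut induction used in Theorem~\ref{thm1}, driving the successor step by the defining recurrences for $a'$. We also build a base-$4$ DFAO for $a'(n)$, which exists because $a'$ is $2$-automatic, playing the role that \texttt{RS4} plays for $a$. With these in place, parts (a) and (b) drop out immediately, by exactly the translations used for Eqs.~\eqref{eq6}--\eqref{eq9} and Lemma~\ref{lemma2}; the factor $(2-(-1)^n)$ in part (b) is handled by splitting on the parity of $n$, as in Lemma~\ref{lemma2}'s treatment of $s(4n+2)$.

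Part (c) is a direct application of the template of Theorem~\ref{thm4}: encode $[4^k,4^{k+1})$ as a regular expression on pairs of base-$4$ digits, ask Walnut for the synchronized automaton picking out extremal $(n, s'(n))$ pairs in this interval, and read the closed forms $(4^{k+1}-4)/3$ and $4^{k+1}-1$ off the accepted paths. Part (f) follows the pattern of Lemma~\ref{thm17} and its consequence: a short Walnut query establishes an $n$-linear upper bound on $m(1-t'(n))$, which combined with Lemma~\ref{pseudosquare} yields the claimed bound on $-t'(n)$; that $t'(n)\leq 0$ is immediate from the image of the value component of the synchronized automaton for $1-t'$.

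Part (d) follows the general shape of Section~\ref{inequalities}. The upper bound $s'(n) \leq \sqrt{75n/7}$ is obtained by the case-splitting strategy used there: identify with Walnut the exceptional set where $m(s'(n)) > 25n/7$, then partition this set by base-$4$ prefix, in exact analogy with the intervals $J_{k,i}$, and verify on each piece a bound of the form $s'(n) \leq C\cdot 2^k$ on $n\in[4^k,4^{k+1})$ keeping the ratio $s'(n)^2/n$ below $75/7$. The lower bound $s'(n) \geq 3\sqrt{n}/2$ is subtler, because $y^2 \geq m(y)$ from Lemma~\ref{pseudosquare} is not tight enough on its own to yield $s'(n)^2 \geq 9n/4$ uniformly: equality $s'(n)^2 = 9n/4$ holds at $n = 4$, where $m(s'(4)) = 5$ is much smaller than $9$. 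The fix is to separate out with Walnut those $n$ for which $s'(n)$ is close to a power of two --- precisely the regime in which the pseudosquare inequality is slack --- and to verify a stronger closed form for $s'(n)^2$ directly on these exceptional values by inspection of the synchronized automaton, handling all remaining $n$ by the generic $m$-argument.

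The main obstacle is part (e). As noted in Section~\ref{intro}, limit-point statements about $s(n)/\sqrt{n}$ fall outside the direct reach of Walnut, and Lemma~\ref{pseudosquare} only pins $s'(n)^2/n$ to a factor-three window around $m(s'(n))/n$. To pin the liminf at $\sqrt{3}$ we must both exhibit a subsequence $n_k$ with $s'(n_k)^2/n_k \to 3$ --- a natural candidate being an extremal sequence from part (c), whose value $s'(n_k)$ admits a closed form verifiable in the style of Section~\ref{special} --- and show that for every rational $\alpha < 3$ only finitely many $n$ satisfy $s'(n)^2 \leq \alpha n$. Since Walnut cannot quantify over the real parameter $\alpha$, the latter is the genuinely hard step: it requires an infinite case analysis on the base-$4$ prefix of $n$ in which the bound is pushed from the uniform $9/4$ up to $3-\varepsilon_k$ with $\varepsilon_k \to 0$, much as Brillhart--Morton handle the classical limit-point results by analytic means.
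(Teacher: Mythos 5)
Note first that the paper gives \emph{no proof} of this theorem; it explicitly says it is ``leaving proofs and further explorations to the reader.'' So there is no paper argument to compare against, and your plan has to be judged on its own. Your overall strategy --- guess and verify $(4,2)$-synchronized automata for $s'$ and $1-t'$ by the inductive check of Theorem~\ref{thm1}, then mimic the translations of Lemma~\ref{lemma2}, Theorem~\ref{thm4}, and Lemma~\ref{thm17} --- is the right one and would dispose of parts (a), (b), (c), and (f) cleanly. (Minor point: the symbol $r'_n$ in (b) is never defined in the paper; checking small cases shows it must denote $a'(n)$, not a count of $00$-blocks, and your proof should say so.)

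Two genuine gaps remain. In part (d) your diagnosis of where the pseudosquare bound is slack is inverted: $m(y)\leq y^2$ is an \emph{equality} precisely when $y$ is a power of two, and is maximally slack when $y=2^j-1$ (all ones in binary). The troublesome $n$ with $m(s'(n))<9n/4$ are thus those where $s'(n)$ has a long run of ones, and this exceptional set is in fact \emph{infinite}: it contains every valley point $n=(4^{k+1}-4)/3$ of part (c), where $s'(n)=2^{k+1}-1$, as one checks that $m(2^{k+1}-1)=(4^{k+1}-1)/3<9n/4$ for all $k\geq 1$. A finite ``inspection'' therefore cannot close the argument; you would need a Walnut-verified identity valid on each infinite branch of the exceptional automaton (in the style of the $J_{k,i}$ intervals of Section~\ref{inequalities}), and you have not exhibited one. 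Part (e) is the larger gap. You correctly observe that Walnut cannot quantify over a real threshold and that (d) only supplies the non-tight uniform bound $9/4<3$, but ``an infinite case analysis on the base-$4$ prefix'' is a gesture, not a proof; it is exactly the kind of analytic limit-point argument that the paper's own introduction flags (in its footnote on limit points of $s(n)/\sqrt{n}$) as lying outside the automaton method. Without a concrete reduction of the lower liminf bound to a finite family of Walnut-checkable statements, or a separate analytic lemma replacing it, part (e) remains unproved in your proposal.
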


Similarly, many of the results in \cite{Lafrance&Rampersad&Yee:2015} can be rederived using a $(4,2)$-synchronized automaton for their summation function $S(N)$.

\section*{Acknowledgments}

We are grateful to Jean-Paul Allouche
for several helpful suggestions.

\end{document}